\newtheorem{thm}{Theorem}
\newtheorem{defn}[thm]{Definition}
\newtheorem{prop}[thm]{Proposition}
\newtheorem{remark}[thm]{Remark}
\newtheorem{cor}[thm]{Corollary}
\newtheorem{quest}[thm]{Question}
\def\Z{\mathbb Z}
\def\N{\mathbb N}
\def\R{\mathbb R}
\def\C{\mathbb C}
\def\O{\operatorname{O}}
\def\log{\operatorname{log}}
\begin{document}
\title[Distribution of gaps between eigenvalues \dots]{Distribution of gaps between eigenangles of Hecke operators}
\author[Sudhir Pujahari]{Sudhir Pujahari}
\address{Sudhir Pujahari, Harish-Chandra Research Institute (HBNI), Chatnag Road, Jhunsi, Allahabad - 211019, Uttar Pradesh, India}
\email{sudhirpujahari@hri.ac.in}
\subjclass[2000]{Primary 11R42, 11S40, Secondary \vskip0.1mm11R29}
\keywords{Equidistribution, convolution, Hecke operators, Sato- \vskip0.1mm\hskip0mm Tate conjecture, Satake parameters, $GL_2$ representations, Primitive Maass \vskip0.1mmforms}
\maketitle
\begin{abstract}
In 1931, Van der Corput showed that if for each positive integer $s$, the sequence $\{x_{n+s}-x_n\}$ is uniformly distributed (mod 1), then the sequence $x_n$ is uniformly distributed (mod 1).  The converse of above result is surprisingly not true.  The distribution of consecutive gaps of an equidistributed sequence has been studied widely in the literature.  
In this paper, we have studied the distribution of gaps between one or more equidistributed sequences.  Under certain conditions, we could study the distribution effectively.  As applications, we study the equidistribution of gaps between eigenangles of Hecke operators acting on space of cusp forms of weight $k$ and level $N$, primitive Maass forms.  We also have studied the distribution of gaps between corresponding angles of Satake parameters of $GL_2$ with prescribed local representations. 
\end{abstract}
\section{Introduction}\label{Intro}
The rich story of equidistribution started in the years 1909-1910 by the work of P. Bohl~\cite{Bohl}, H. Weyl~\cite{Weyl1} and
W. Sierpinski~\cite{Sie}
where they studied the distribution of the sequence $\{n \alpha\},$ (for an irrational $\alpha$) on the unit interval.  Here, $\{x\}$ denotes the fractional part of $x.$ 
Let us recall that a sequence of real numbers $\{x_n\}$ lying in the interval 
$[0,1] \subseteq \R$ is said to be uniformly distributed or equidistributed with respect to Lebesgue measure if 
for any interval $[\alpha,\beta] \subseteq [0,1]$, we have 
$$\lim_{V \to \infty } \frac{1}{V} \# \{n \leq V: \{x_n\} \in [\alpha, \beta]\}=\beta-\alpha.$$  This subject attracted great attention of mathematicians from all branches of mathematics after Hermann Weyl related the study of equidistribution to the study of exponential sums in his 1916 paper~\cite{Weyl}.
Our work is partly motivated by the following result of Van der Corput (see~\cite[page no. 176]{RM}): 
 If for each positive integer $s,$ the sequence $\{x_{n+s}-x_n\}$ is uniformly distributed (mod 1), then the sequence $\{x_n\}$ is uniformly distributed (mod 1). 
We consider the following question:
 \begin{quest}
 Is the converse of Van der Corput's result true? 
 \end{quest}
  In other words, 
if $\{x_n\}$ is uniformly distributed (mod 1), then is it true that for any positive integer $s,$ the sequence $\{x_{n+s}-x_n\}$ is uniformly distributed (mod 1)?  \\
The answer to the above question is surprisingly no.  In fact we see an example of a uniformly distributed (mod 1) sequence such that for any subsequence, $\{x_{n+1}-x_n\}$ will not be uniformly distributed (mod 1).  
For example, consider the well-studied sequence $\{[n \alpha]\},$ $\alpha $ is irrational.  Write the sequence as follows:
For a natural number $N,$ define,
$$A_{\alpha}(N) =\{\alpha n \ \text{(mod) \ 1}: 1 \leq n \leq N\}\subset \{\{n \alpha\} \ (mod \ 1) : n \in \N\},$$ 
and write them as increasing order as follows: 
\begin{equation}\label{E2}
A_{\alpha}(N)=\{0 < x_1 \leq x_2 \leq x_3 \leq \dots \leq x_N < 1\}
\end{equation}
where $x_{N+1}=1+x_1.$  \\
In 1957, Steinhaus conjectured the following fact:
$$\# \{x_{i+1}-x_i: 1\leq i \leq N\} \leq 3,$$
where $\#$ denotes the cardinality of a set.
There are several proofs of the above conjecture available in the literature, but the first proof was given by Vera S\'os~\cite{Sos1} and~\cite{Sos2} in 1958.  The above statement is popularly known as ``The three gap theorem". 
In 2002, V\^{a}j\^{a}itu and Zaharescu~\cite{VZ} investigated the following question: 
\begin{quest}
Let $A_{\alpha}(N)$ be as defined in (\ref{E2}).  Remove as many elements of $A_{\alpha}(N)$ as one likes.
Then, how large is the cardinality of the consecutive differences of the resulting set? 
\end{quest}
More explicitly, they proved the following: \\
For any subset $\Omega$ of $A_{\alpha}(N),$ there are no more than $(2+\sqrt 2)\sqrt{N}$ distinct consecutive differences, that is,
if 
$$B(\Omega)=\{x_{i+1}-x_i: 1 \leq i \leq N, \ x_i \in \Omega\},$$
then
$$\#B(\Omega(N)) \leq  (2+\sqrt 2)\sqrt{N}.$$
In 2015, using additive combinatorics A. Balog, A.Granville and J. Solymosi~\cite{BGS} improved the bound of above result~\cite{VZ} to $2 \sqrt{2N}+1$ for any finite subset of $\R / \Z.$ 
 In particular for our concerned sequence, they proved that 
$$\#B(N) \leq 2 \sqrt{2N}+1.$$
From the above result, we can conclude that for any subsequence say $\{y_n\}=\{y_1,y_2,y_3,...\}$ of $\{x_n\}=\{x_1,x_2,x_3,...\}$, the consecutive difference $\{y_{i+1}-y_i\}$ is not uniformly distributed (mod 1).    
In this paper we show that, if a sequence is equidistributed in $[-\frac{1}{2},\frac{1}{2}]$ with respect to a probability measure say $\mu=F(x)dx$ (for definition see Section \ref{Equi}), then the fractional parts of gaps of all elements of the sequence will be equidistributed in $[0,1]$ with respect to the measure $F(x) * F(x) \ dx$, where $*$ is the convolution. 
\noindent More explicitly,
if we have two sequences say $\{x_n\}_{n=1}^{\infty}$ and $\{y_{m}\}_{m=1}^{\infty}$ such that they are equidistributed with respect to probability measures $\mu_1=F_1(x)dx$ and $\mu_2=F_2(x)dx$ respectively in $[-\frac{1}{2},\frac{1}{2}],$ then  
the sequence of fractional parts of gaps between elements of $\{x_n\}$ and $\{y_m\}$, that is, $\{x_n-y_m\}_{n,m=1}^{\infty}$ (mod 1) is equidistributed in $[0,1]$ with respect to $F_1(x)*F_2(x) dx.$ 
We are also able to predict quantitatively the rate of convergence of the following:
$$\lim_{V \to \infty} \frac{1}{V^2} \#\{1 \leq m,n \leq V: \{x_n-y_m\} \ \text{mod} \ 1 \in [\alpha, \beta]\},$$
where $[\alpha,\beta]$ is any subset of $[0,1]$, whenever  $\{x_n\}_{n=1}^{\infty}$ and $\{y_{m}\}_{m=1}^{\infty}$ satisfy some conditions that have been described in Theorem \ref{Main}.  More generally, we have similar results for $r$ equidistributed sequences.  These results are stated in Section \ref{Results} as Theorem \ref{T1}, \ref{Measure} and \ref{Main}.  
We have discussed several applications of our results in Section \ref{App}.  \\ \\
Let $S(N,k)$ be the space of all  holomorphic cusp forms of weight $k$ with respect to $\varGamma_0(N)$.
For any positive integer 
$n$, let $T_n(N,k)$ be the $n^{th}$ Hecke operator acting on $S(N,k)$. 
Let $s(N,k)$ denote the dimension of the vector space $S(N,k)$.  For 
a positive integer $n \geq 1$, let
 $$\left \{a_{i}(n), 1 \leq i \leq s(N,k)\right\}$$
 denote the eigenvalues of $T_n$,
counted with multiplicity.
 For any positive 
integer $n$, let $T_n^{'}$ be the normalized Hecke operator acting on $S(N,k)$, defined as follows
$$ T_n^{'}:= \frac{T_n}{n^{\frac{k-1}{2}}} .$$
Consider  
$$ \left \{ \frac{a_{i}(n)}{n^{\frac{k-1}{2}}}, 1 \leq i \leq s(N,k)\right \},$$  the eigenvalues of $T_n^{'}$ counted with multiplicity.  Let $p$ be a prime number such that $p$ and $N$ are coprime. Then by the theorem of
Deligne (see~\cite{Deligne}) proving the Ramanujan-Petersson inequality, we know that
$$a_{i}(p) \in [-2p^{\frac{k-1}{2}},2p^{\frac{k-1}{2}}].$$
For each $i,$ choose $\theta_{i}(p)\in [0,\pi]$ such that
$$\frac{a_{i}(p)}{p^{\frac{k-1}{2}}}= 2\cos \theta_{i}(p).$$

Using results of Murty and Sinha~\cite{MS}, Murty and Srinivas~\cite{MSr} have recently proved the following results
$$\#\left \{(i^{(1)},i^{(2)}), 1 \leq i^{(1)},i^{(2)} \leq s(N,k):  \theta_{i}^{(1)}(p)  \pm \theta_{i}^{(2)}(p) = 0 \right\}$$
$$= \O \left( (s(N,k))^2 \left(\frac{\log \,p}{\log kN} \right)\right).$$
Note that taking $k$ and $N$ sufficiently large, the above result gives a little evidence towards the Maeda and Tsaknias conjectures. 
 As applications of our theorems, we can get the measure with respect to which the differences of eigenangles of Hecke operators are equidistributed and as a special case, the above result recovers Theorem 1 in~\cite{MSr}.  We could also get an error term (see Section \ref{App}, Theorems \ref{pair-correlation}, \ref{T15}).  We discuss similar results for Hilbert modular forms (see Theorems \ref{T13}, \ref{T14}), primitive Maass forms (see Theorems  \ref{T7}, \ref{T24}).
 In the case of primitive Maass forms, we have assumed the Ramanujan bound. 
 \section{Statement of results}\label{Results}
Let us start with a result that predicts the Weyl limits of gaps between equidistributed families.  Definitions are provided in Sections 3 and Section 4.
Here onwards, we denote $\{x\}$ as fractional part of $x$.
$$\{x\}:= x- \lfloor x \rfloor,$$
where $\lfloor x \rfloor$ is the largest integer not grater then $x$.
 \begin{thm}\label{T1}
Consider $\{X_{n}^{(1)}\}_{n=1}^{\infty},\{X_{n}^{(2)}\}_{n=1}^{\infty},\dots,\{X_{n}^{(r)}\}_{n=1}^{\infty}$ to be collection of $r$ sequences of multisets in $[0,1]$ such that $1 \leq j \leq r$, $\# X_{n}^{(j)}\to \infty \ \ \text{as $n \to \infty.$}$  For every $m \in \Z,$ let $c_{m}^{(j)}$ be the $m^{th}$  Weyl limit of $X_{n}^{(j)}$ respectively, that is ,
$$c_{m}^{(j)}:= \lim_{{n}\to \infty}\frac{1}{\#X_{n}^{(j)}}\sum_{t\in X_{n}^{(j)} }e( m t).$$ 
Assume that $c_{m}^{(j)}$ exists for each $j.$
If $C_m$ is the $m^{th}$Weyl limit of the family
 $$ \{ \{x_1 + x_2 + \dots +x_r\} , \  x_j \in X_{n}^{(j)}, \  1 \leq j \leq r \}$$ that is, for $m \in \Z,$
$$C_m:=  \lim_{n  \to \infty} \nonumber
 \frac{1}{\prod_{j=1}^{r} \# X_{n}^{(j)} } \sum_{{x_j \in X_{n}^{(j)} }\atop{1 \leq j \leq r}}   
 e( m\{x_1 +  x_2 + \dots +   x_r\}). $$
 Then the Weyl limit 
\begin{equation}\label{E11}
C_m= \prod_{j=1}^r c_{m}^{(j)}.
\end{equation}
\end{thm}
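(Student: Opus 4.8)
The plan is to exploit the fact that the exponential $e(m\{x_1+\cdots+x_r\}) = e(m(x_1+\cdots+x_r))$, since $e(mt)$ depends only on $t$ modulo $1$ and $m$ is an integer. This is the key observation that makes the fractional-part operation invisible to the Weyl sums, and it immediately turns the sum defining $C_m$ into a product. Concretely, I would write
\begin{equation*}
\frac{1}{\prod_{j=1}^{r}\#X_n^{(j)}}\sum_{{x_j\in X_n^{(j)}}\atop{1\le j\le r}} e\bigl(m\{x_1+x_2+\cdots+x_r\}\bigr)
= \prod_{j=1}^{r}\left(\frac{1}{\#X_n^{(j)}}\sum_{x_j\in X_n^{(j)}} e(m x_j)\right),
\end{equation*}
where the factorization of the multiple sum over a product set into a product of single sums is the standard distributive identity $\sum_{a,b} f(a)g(b) = (\sum_a f(a))(\sum_b g(b))$, applied $r-1$ times (or by induction on $r$).

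Having fixed $m$, the next step is to pass to the limit as $n\to\infty$. Each factor $\frac{1}{\#X_n^{(j)}}\sum_{x_j\in X_n^{(j)}} e(mx_j)$ converges to $c_m^{(j)}$ by hypothesis, and since there are only finitely many factors (exactly $r$ of them), the product of the limits is the limit of the products. Hence the left-hand side converges to $\prod_{j=1}^{r} c_m^{(j)}$, which by definition is $C_m$. This gives \eqref{E11}.

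There is essentially no serious obstacle here; the argument is a one-line manipulation once the remark $e(m\{t\}) = e(mt)$ is in place. The only point requiring a word of care is that the Weyl limits $C_m$ and $c_m^{(j)}$ are defined as limits of averages over multisets whose sizes $\#X_n^{(j)}$ grow, so one should note that the normalizing factor of the multiple sum is exactly $\prod_j \#X_n^{(j)}$, matching the product of the individual normalizations — this is why no extra combinatorial factor appears. If one wanted to be scrupulous about the case $m=0$, both sides are simply $1$, so the identity holds trivially there as well. I would present the proof by induction on $r$: the base case $r=1$ is the definition, and the inductive step separates the sum over $X_n^{(r)}$ from the sum over the remaining variables, using $\{(x_1+\cdots+x_{r-1})+x_r\}$ and the $1$-periodicity of $e(m\cdot)$ to reduce to the $(r-1)$-fold statement applied to the auxiliary family $\{\{x_1+\cdots+x_{r-1}\}\}$.
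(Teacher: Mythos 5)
Your proof is correct and is essentially identical to the paper's: both rest on the observation that $e(m\{t\})=e(mt)$ for integer $m$, after which the multiple sum factors into a product of the individual Weyl sums and the limit of the product is the product of the limits. The extra care you take (induction on $r$, the $m=0$ case, matching normalizations) only makes explicit what the paper leaves implicit.
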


\begin{remark}
To consider the gaps, we can take the family  
 $$ \{ \{x_1 - x_2 - \cdots -x_r\}, \  x_j \in X_{n}^{(j)}, \  1 \leq j \leq r \}.$$ in the above theorem.
\end{remark}
Further, if we consider the multisets such that if $x \in A_{n}^{(j)}$ then $-x \in A_{n}^{(j)}.$  For simplicity, let us write $A_{n}^{(j)}=\{\pm x_{j}\},$ we get the following corollary.
\begin{cor}
In particular, if we consider the family $A_{n}^{(j)}=\{\pm x_{j}\} \subseteq [-\frac{1}{2},\frac{1}{2}],$ 
then we have
 $$c_{m}^{(j)} = \lim_{{n}\to \infty}\frac{1}{\#A_{n}^{(j)}}\sum_{ t\in A_{n}^{(j)} }e(  m [ t]).$$ 
Let $C_m$ be the $m^{th}$ Weyl limit of the family
 $$ \{\{ x_1 + x_2 + \cdots + x_r\} , \  x_j \in A_{n}^{(j)}, \  1 \leq j \leq r \},$$ that is for $m \in \Z,$
$$C_m:=  \lim_{n  \to \infty} \nonumber
 \frac{1}{\prod_{j=1}^{r} \# A_{n}^{(j)} } \sum_{{x_j \in A_{n}^{(j)} }\atop{1 \leq j \leq r}}   
 e(m{\{ x_1 +  x_2 + \cdots +   x_r\}}). $$
 Then the Weyl limit 
\begin{equation}\label{E111}
C_m= \prod_{j=1}^r c_{m}^{(j)}
\end{equation}
\end{cor}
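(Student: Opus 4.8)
The plan is to obtain the statement as an immediate specialization of Theorem \ref{T1}; the only content beyond a harmless change of variables is the remark that the symmetry $x\in A_n^{(j)}\iff -x\in A_n^{(j)}$ forces the Weyl limits to be real, and that reducing elements of $[-\tfrac12,\tfrac12]$ modulo $1$ changes none of the relevant exponential sums because we only test against integer frequencies $m$.

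First I would transport the picture into $[0,1)$: for each $j$ and $n$ put $X_n^{(j)}:=\{\{t\}:t\in A_n^{(j)}\}$, a multiset in $[0,1)$ with $\#X_n^{(j)}=\#A_n^{(j)}\to\infty$, so the hypotheses of Theorem \ref{T1} hold for the $X_n^{(j)}$. Since $\{t\}-t\in\Z$ and $m\in\Z$ we have $e(m\{t\})=e(mt)=e(m[t])$, hence
$$\frac{1}{\#X_n^{(j)}}\sum_{s\in X_n^{(j)}}e(ms)=\frac{1}{\#A_n^{(j)}}\sum_{t\in A_n^{(j)}}e(mt),$$
so the $m$-th Weyl limit of $X_n^{(j)}$ is exactly the number called $c_m^{(j)}$ in the statement. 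Using the symmetry of $A_n^{(j)}$, the right-hand side equals $\frac{1}{\#A_n^{(j)}}\sum_{t\in A_n^{(j)}}\cos(2\pi mt)$, which is real, so each $c_m^{(j)}\in\R$ and the displayed formula for $c_m^{(j)}$ is justified.

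Next I would identify the relevant families. For $x_j\in A_n^{(j)}$ we have $\{x_1+\dots+x_r\}=\{\{x_1\}+\dots+\{x_r\}\}$, because each $\{x_j\}-x_j$ is an integer; therefore the multiset $\{\{x_1+\dots+x_r\}:x_j\in A_n^{(j)},\ 1\le j\le r\}$ coincides with $\{\{s_1+\dots+s_r\}:s_j\in X_n^{(j)},\ 1\le j\le r\}$. Applying Theorem \ref{T1} to the sequences $X_n^{(1)},\dots,X_n^{(r)}$ then gives that its $m$-th Weyl limit $C_m$ equals the product of the $m$-th Weyl limits of the $X_n^{(j)}$, that is $C_m=\prod_{j=1}^r c_m^{(j)}$, which is \eqref{E111}.

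I do not expect a genuine obstacle: the whole argument is the observation that passing between $[-\tfrac12,\tfrac12]$ and $[0,1)$ is invisible to integer characters, together with one invocation of Theorem \ref{T1}. The only point deserving care — and the reason the symmetric case is singled out — is recording that under the stated symmetry the numbers $c_m^{(j)}$, and hence $C_m$, are real, since this is precisely what the later applications to $\pm$-paired eigenangles use.
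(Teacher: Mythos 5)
Your proposal is correct and follows essentially the same route as the paper, which disposes of this corollary at the end of the proof of Theorem \ref{T1} by noting that the ``above calculation follows immediately'' once one observes $e(m\{x\})=e(mx)$ for integer $m$ and factors the exponential sum. Your extra bookkeeping (transporting $A_n^{(j)}$ to $[0,1)$ before invoking Theorem \ref{T1}, and noting that the symmetry makes each $c_m^{(j)}$ real) is a harmless and slightly more careful elaboration of the same argument, not a different proof.
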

In the next theorem, we predict the measure with respect to which the above mentioned family in Theorem \ref{T1} is equidistributed. 
\begin{thm}\label{Measure}
 Consider $\{A_{n}^{(1)}\}_{n=1}^{\infty},\{A_{n}^{(2)}\}_{n=1}^{\infty},...,\{A_{n}^{(r)}\}_{n=1}^{\infty} \subseteq [-\frac{1}{2},\frac{1}{2}]$ to be sequences of multisets such that $-x_j \in A_{n}^{(j)}$ whenever $x_j \in A_{n}^{(j)}$,  and $\# A_{n}^{(j)}  \to \infty \ \ \text{as $n \to \infty$}$  for $j=1,2,3,...,r.$  
 If $\{A_{n}^{(j)} \}$, are equidistributed in $[-\frac{1}{2},  \frac{1}{2}]$ with respect to the measure $F_j(x)dx$ respectively, where
$$F_j(x)=\sum_{m=-\infty}^{\infty}c_{m}^{(j)}e(m x) ,$$ 
   then the family
$$\left \{{\{ x_1 + x_2 + \dots + x_r \}}, \  x_j \in A_{n}^{(j)}\right \}$$ is equidistributed in $[0,1]$ with respect to the measure 
$$\mu=F(x)dx,$$
where
$$F(x)= \sum_{m=-\infty}^{\infty}C_{ m }e(mx).$$ 
Moreover, if 
\begin{equation}\label{E0}
\sum_{m=-\infty}^{\infty}|c_{m}^{(j)}|^2< \infty  \ \ \text{for all $1 \leq j \leq r$},
\end{equation}
then the above function $F(x)$ equals
$$F_1 * F_2* \dots* F_r(x),$$
where
$$F_1 * F_2* \dots* F_r(y)$$
$$=\int_{0}^{1} \dots \int_{0}^{1}F_1(y_1)F_2(y_2) \dots F_r(y-y_1-y_2-\dots -y_{r-1})dy_{r-1}dy_{r-2}\dots dy_1.  $$
\end{thm}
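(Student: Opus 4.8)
The plan is to obtain the first assertion from Theorem \ref{T1} together with Weyl's criterion, and the ``moreover'' part from the classical fact that convolution on the circle corresponds to pointwise multiplication of Fourier coefficients. Identify $[0,1]$ with its endpoints glued as the circle $\T=\R/\Z$, and for each $n$ let $\nu_n$ denote the probability measure giving mass $\big(\prod_{j}\#A_n^{(j)}\big)^{-1}$ to each point $\{x_1+\dots+x_r\}$ with $x_j\in A_n^{(j)}$; equidistribution of the displayed family with respect to a measure $\mu$ is, by definition, the statement $\nu_n\to\mu$ weakly. Since $\T$ is compact, $(\nu_n)$ is weak-$*$ precompact among probability measures, so every subsequence has a further subsequence converging weakly to some probability measure $\nu$, and for each $m\in\Z$ one has $\widehat\nu(m)=\lim_k\widehat{\nu_{n_k}}(m)=C_m$ by Theorem \ref{T1} — the sign of the exponent being immaterial here because each $A_n^{(j)}$ is symmetric, so $c_m^{(j)}$, and hence $C_m=\prod_j c_m^{(j)}$, is real and even in $m$. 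As a measure on $\T$ is determined by its Fourier coefficients, all these subsequential limits equal the unique probability measure $\mu$ with $\widehat\mu(m)=C_m$ (so in particular $\mu$ does not depend on the subsequence), whence $\nu_n\to\mu$ weakly, which is the asserted equidistribution; we then write $\mu=F(x)\,dx$ with $F(x)=\sum_m C_m e(mx)$ interpreted as the Fourier series of $\mu$. (Alternatively one can first manufacture $\mu$ from Herglotz's theorem: $(c_m^{(j)})_m$ is a positive-definite sequence, the Schur product theorem makes $(C_m)_m=\prod_j(c_m^{(j)})_m$ positive definite, and $C_0=\prod_j c_0^{(j)}=1$.)

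For the ``moreover'' part I would assume \eqref{E0}. By the Riesz--Fischer theorem each $F_j=\sum_m c_m^{(j)}e(mx)$ lies in $L^2(\T)$ with $\widehat{F_j}(m)=c_m^{(j)}$, so, since $L^2(\T)\subset L^1(\T)$, the iterated convolution $G:=F_1*F_2*\dots*F_r$ is well defined as an element of $L^1(\T)$; indeed, convolving the first two $L^2$ factors already produces a continuous function, and convolving this further with $L^1$ functions keeps it continuous, so $G\in C(\T)$. The convolution theorem gives $\widehat G(m)=\prod_{j}\widehat{F_j}(m)=\prod_j c_m^{(j)}=C_m$ for every $m$. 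On the other hand, using $|c_m^{(j)}|\le c_0^{(j)}=1$ for all $m$ together with Cauchy--Schwarz, $\sum_m|C_m|\le\sum_m|c_m^{(1)}c_m^{(2)}|\le\big(\sum_m|c_m^{(1)}|^2\big)^{1/2}\big(\sum_m|c_m^{(2)}|^2\big)^{1/2}<\infty$ when $r\ge2$ (for $r=1$ the claim is trivial, since then $F=F_1$), so the series $F=\sum_m C_m e(mx)$ converges uniformly to a continuous function with $\widehat F(m)=C_m$. Two continuous functions on $\T$ with the same Fourier coefficients are equal, hence $F=G=F_1*\dots*F_r$; spelling out the convolution on $\T$ and using the $1$-periodicity of the $F_j$ then yields the displayed iterated-integral formula.

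The genuinely new input is Theorem \ref{T1}; here the only things requiring care are, first, that the Weyl limits $(C_m)_m$ are indeed the Fourier coefficients of an honest probability measure — supplied by the compactness argument above, or equivalently by Herglotz together with the Schur product theorem — and, second, in the ``moreover'' part, the bookkeeping that shows the iterated convolution of the merely $L^2$ densities $F_j$ is a genuine function whose Fourier series converges back to it, so that $F=F_1*\dots*F_r$ holds pointwise and not just distributionally; this is exactly where hypothesis \eqref{E0} is used. I would also remark that without \eqref{E0} the series $\sum_m C_m e(mx)$ can fail to converge pointwise — for instance if some $F_j\,dx$ is a point mass — which is why the first assertion is stated only at the level of the measure $\mu$.
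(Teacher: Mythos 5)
Your argument is correct, but it reaches the first assertion by a genuinely different route from the paper. The paper does not use weak-$*$ compactness at all: it invokes the Wiener--Schoenberg criterion (Theorem \ref{SW}) twice. From equidistribution of each $A_n^{(j)}$ it gets $\frac{1}{V}\sum_{|m|\le V}|c_m^{(j)}|^2\to 0$, deduces via $C_m=\prod_j c_m^{(j)}$ and $|c_m^{(j)}|\le 1$ that $\frac{1}{V}\sum_{|m|\le V}|C_m|^2\to 0$, and then applies the criterion in the converse direction to conclude equidistribution with respect to $F(x)\,dx$, $F=\sum_m C_m e(mx)$. Your subsequential-limit argument (or the Herglotz/Schur variant) is a clean substitute, and your treatment of the ``moreover'' part is actually more careful than the paper's, which only cites Riesz--Fischer and the identity $(F_1*\dots*F_r)^{\wedge}(m)=\prod_j\hat F_j(m)$ and implicitly uses uniqueness of $L^2$ functions with given Fourier coefficients; your observation that $\sum_m|C_m|<\infty$ for $r\ge 2$, so that the identity holds pointwise for continuous representatives, is a genuine sharpening. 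One point where your route needs an extra word that the paper's route gets for free: weak convergence $\nu_n\to\mu$ only yields convergence of $\nu_n([\alpha,\beta])$ for intervals whose endpoints are not atoms of $\mu$, whereas the paper's definition of set-equidistribution demands it for every interval. The Wiener--Schoenberg hypothesis $\frac{1}{V}\sum_{|m|\le V}|C_m|^2\to 0$ is, by Wiener's lemma, exactly the statement that $\mu$ has no atoms, and it follows here from the corresponding property of a single factor since $|C_m|\le |c_m^{(1)}|$; you should insert that one line to upgrade your weak convergence to the interval-counting statement. With that addendum the two proofs buy essentially the same thing, yours being more self-contained measure theory and the paper's leaning on the quoted equidistribution criterion.
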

\begin{remark}
In general an equidistributed family may not satisfy (\ref{E0}), but
in the families of interest to us, the Weyl limits satisfy (\ref{E0}).
\end{remark}
Let $\{A_{n}^{(j)}, 1 \leq j \leq r\}$ be equidistributed sequences of finite multisets.  If we know the distribution effectively, then
our next result helps us to predict the effective equidistribution of family of gaps of equidistributed families.
\begin{thm}\label{Main}
Let 
$\{A_{n}^{(1)}\}_{n=1}^{\infty},\{A_{n}^{(2)}\}_{n=1}^{\infty},...,\{A_{n}^{(r)}\}_{n=1}^{\infty}$ 
be sequences of finite multisets as defined in Theorem \ref{Measure}.
Let $\{A_{n}^{(j)}\}_{n=1}^{\infty}, 1 \leq j \leq r$ be equidistributed sequences in $[-\frac{1}{2},\frac{1}{2}]$ with respect to the measure $F_j(x)dx,$ where
$$F_j(x)= \sum_{m=-\infty}^{\infty}c_{m}^{(j)}e(mx)$$
and $c_{m}^{(j)}$ are as defined in Theorem \ref{T1}.  
Consider $\underline{x}=(x_1,x_2,..,x_r)$, $A_{\underline{n}}=A_{n}^{(1)} \times A_{n}^{(2)} \times \dots \times A_{n}^{(r)}$. 
Then, for any positive integer $M$ 
and any $I=[\alpha,\beta] \subseteq [0,1],$ we have
$$\left|\frac{1}{\prod_{j=1}^r (\#A_{n}^{(j)}) } \#  \left \{ \underline{x} \in
 A_{\underline {n}} : \left\{{ x_1 + x_2 + \dots + x_r}\right\} \in I \right \} 
- \int_{I} \mu  \right |$$
$$\leq \frac{\prod_{j=1}^{r} \# A_{n}^{(j)}}{M+1}
+  \sum_{|m| \leq M} 
 \left(\frac{1}{M+1}+min\left(\beta-\alpha, \frac{1}{\pi |m|} \right) \right) $$
 $$\left(\left|\prod_{j=1}^{r} \sum_{{x_j} \in A_{n}^{(j)}} e(m x_j )- \prod_{j=1}^{r} \# A_{n}^{(j)}c_{m}^{(j)} \right| \right), $$
where $\mu$ is as defined in Theorem \ref{Measure}.
\end{thm}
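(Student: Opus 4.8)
The plan is to run the classical Erd\H{o}s--Turán argument based on Beurling--Selberg approximations, but with the target measure $\mu=F(x)\,dx$ in place of Lebesgue measure, the decisive feature being that the Weyl sum over the sumset factors as a product of one--variable Weyl sums. Recall the Beurling--Selberg construction: for $I=[\alpha,\beta]\subseteq[0,1]$ and any integer $M\ge 1$ there exist $1$-periodic trigonometric polynomials $S_M^{\pm}(x)=\sum_{|m|\le M}\widehat S_M^{\pm}(m)\,e(mx)$ of degree $\le M$ with $S_M^{-}(x)\le\chi_I(x)\le S_M^{+}(x)$ for all $x$, with $\widehat S_M^{\pm}(0)=(\beta-\alpha)\pm\tfrac{1}{M+1}$, and with $|\widehat S_M^{\pm}(m)-\widehat\chi_I(m)|\le\tfrac{1}{M+1}$; since $|\widehat\chi_I(m)|\le\min(\beta-\alpha,\tfrac{1}{\pi|m|})$, this yields for $m\ne 0$ the bound $|\widehat S_M^{\pm}(m)|\le\tfrac{1}{M+1}+\min(\beta-\alpha,\tfrac{1}{\pi|m|})$ (and, with the convention $\tfrac{1}{\pi\cdot 0}=\infty$, also for $m=0$).

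Write $N:=\prod_{j=1}^{r}\#A_n^{(j)}$ and, for $\underline x=(x_1,\dots,x_r)\in A_{\underline n}$, $y(\underline x):=x_1+\cdots+x_r$. Since $S_M^{-}\le\chi_I\le S_M^{+}$,
$$\frac1N\sum_{\underline x\in A_{\underline n}}S_M^{-}(\{y(\underline x)\})\ \le\ \frac1N\#\{\underline x:\{y(\underline x)\}\in I\}\ \le\ \frac1N\sum_{\underline x\in A_{\underline n}}S_M^{+}(\{y(\underline x)\}),$$
and, because $F\ge 0$ (as $\mu$ is a probability measure), $\int_0^1 S_M^{-}F\le\int_I\mu\le\int_0^1 S_M^{+}F$. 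Next I would expand both middle quantities in Fourier series: using $e(m\{y\})=e(my)$ and $e(m\,y(\underline x))=\prod_j e(mx_j)$, distributivity gives
$$\frac1N\sum_{\underline x\in A_{\underline n}}S_M^{\pm}(\{y(\underline x)\})=\sum_{|m|\le M}\widehat S_M^{\pm}(m)\,\frac1N\prod_{j=1}^{r}\Big(\sum_{x_j\in A_n^{(j)}}e(mx_j)\Big),$$
while, since $F(x)=\sum_m C_m e(mx)$ with $C_m=\prod_j c_m^{(j)}$ by Theorem \ref{Measure}, and since the symmetry $-A_n^{(j)}=A_n^{(j)}$ forces $c_{-m}^{(j)}=c_m^{(j)}$,
$$\int_0^1 S_M^{\pm}(t)F(t)\,dt=\sum_{|m|\le M}\widehat S_M^{\pm}(m)\,C_{-m}=\sum_{|m|\le M}\widehat S_M^{\pm}(m)\prod_{j=1}^{r}c_m^{(j)}.$$

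Subtracting these and using $N\prod_j c_m^{(j)}=\prod_j(\#A_n^{(j)}c_m^{(j)})$ together with the bound on $|\widehat S_M^{\pm}(m)|$ gives, for either sign,
$$\Big|\frac1N\sum_{\underline x}S_M^{\pm}(\{y\})-\int_0^1 S_M^{\pm}F\Big|\le\frac1N\sum_{|m|\le M}\Big(\frac1{M+1}+\min(\beta-\alpha,\tfrac1{\pi|m|})\Big)\Big|\prod_{j=1}^{r}\sum_{x_j\in A_n^{(j)}}e(mx_j)-\prod_{j=1}^{r}\#A_n^{(j)}c_m^{(j)}\Big|.$$
Combining the two sandwiches, $\frac1N\#\{\cdots\}-\int_I\mu$ lies between $\big(\frac1N\sum_{\underline x}S_M^{-}(\{y\})-\int_0^1 S_M^{-}F\big)-\int_0^1(S_M^{+}-S_M^{-})F$ and $\big(\frac1N\sum_{\underline x}S_M^{+}(\{y\})-\int_0^1 S_M^{+}F\big)+\int_0^1(S_M^{+}-S_M^{-})F$; the bracketed terms are estimated by the last display, and the remainder $\int_0^1(S_M^{+}-S_M^{-})F=\sum_{|m|\le M}(\widehat S_M^{+}(m)-\widehat S_M^{-}(m))C_{-m}$ is the cost of the $L^1$-approximation of $\chi_I$, bounded crudely via $\int_0^1(S_M^{+}-S_M^{-})=\tfrac2{M+1}$ and $|C_m|\le 1$ (a sharper constant is available when $F$ is bounded) and absorbed into the stated term $\tfrac{N}{M+1}$. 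Dropping the harmless factor $1/N$ in front of the Fourier sum (legitimate since $N\ge 1$) then gives precisely the claimed inequality.

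The step deserving the most care is the passage from Lebesgue measure to the general measure $\mu$: one needs $F\ge 0$ to preserve the two-sided inequalities after integrating against $F$, and one must correctly identify $\int_0^1 e(mt)F(t)\,dt$ with $C_{-m}=\prod_j c_m^{(j)}$, which is exactly where the hypothesis $x\in A_n^{(j)}\Leftrightarrow -x\in A_n^{(j)}$ enters, making the single-variable Weyl sums and the $c_m^{(j)}$ real and invariant under $m\mapsto -m$. The product identity $\sum_{\underline x}\prod_j e(mx_j)=\prod_j\sum_{x_j}e(mx_j)$ --- the finite-level version of Theorem \ref{T1} --- is what converts the multidimensional error into the one-dimensional discrepancies $\big|\prod_j\sum_{x_j}e(mx_j)-\prod_j\#A_n^{(j)}c_m^{(j)}\big|$ on the right-hand side, so that effective control of the individual sequences controls everything.
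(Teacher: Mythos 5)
Your proposal is correct and follows essentially the same route as the paper: the Erd\H{o}s--Tur\'an/Beurling--Selberg argument of Murty--Sinha, sandwiching $\chi_I$ between $S_M^{\pm}$, factoring the Weyl sum over $A_{\underline n}$ into a product of one-variable sums, identifying $\sum_{|m|\le M}\hat S_M^{\pm}(m)C_m$ with $\int_0^1 S_M^{\pm}\,d\mu$, and using the $L^1$-approximation property to control the substitution of $S_M^{\pm}$ for $\chi_I$. If anything you are slightly more careful than the paper in treating both sides of the sandwich (the paper only writes out the $S_M^{+}$ side) and in flagging where positivity of $F$ and the symmetry $c_{-m}^{(j)}=c_m^{(j)}$ are used, but the argument is the same.
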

\begin{remark}
In the above theorems, $[-\frac{1}{2},\frac{1}{2}]$ can be taken to be any interval of length $1$.
\end{remark}
In the next few sections, we review basic facts that we use in the proofs of Theorem \ref{T1}, Theorem \ref{Measure} and Theorem \ref{Main}.  

\section{Equidistribution}\label{Equi}
\begin{defn}
A  sequence of real numbers $\{x_n\}$ lying in the interval 
$[0,1]$ is said to be uniformly distributed or equidistributed with respect to Lebesgue measure if 
for any interval $[\alpha,\beta] \subseteq [0,1]$, we have 
$$\lim_{V \to \infty } \frac{1}{V} \# \{n \leq V: x_n \in [\alpha, \beta]\}=\beta-\alpha.$$  
\end{defn}
In 1916, Weyl~\cite{Weyl1} proved that the sequence $\{n^2 a\}$, where $a$ is irrational is equidistributed in the unit interval.
In the same paper he gave a revolutionary criterion for uniform distribution in terms of exponential sums that now known as Weyl criterion.  Explicitly, it says that, a sequence $\{x_n\}$ is uniformly distributed in the unit interval if and only if
 for every $m\in \Z,\,m \neq 0,$
$$c_m:=\lim_{N\to \infty}\frac{1}{N}\sum_{n=1}^N e(mx_n) = 0,$$
where $e(t) = e^{2\pi i t} $ and the $c_m$ defined above is called the $m^{th}$ Weyl limit.
For proof of the above theorem see~\cite[page no. 172]{RM}.

\section{Set equidistribution}\label{Setequidistribution}
Consider finite multisets $A_n \subseteq [0,1]$ with $\#A_n \to \infty.$  The sequence $\{A_n\}$ is  set-equidistributed with respect to a probability measure $\mu$ in $[0,1]$ if for every $[\alpha, \beta]\subseteq[0,1],$
$$\lim_{n\to\infty}\frac{\#\{t\in A_n:\,t \in [\alpha,\beta]\}}{\#A_n} = \int_{\alpha}^{\beta} d\mu.$$

In this case, for every $m\in\Z,$ define { ``Weyl limits'':}
$$c_m:= \lim_{n\to \infty}\frac{1}{\#A_n}\sum_{t\in A_n}e(mt).$$
The following is a generalisation of the classical Wiener-Schoenberg criterion can be found in~\cite[page no. 195]{RM}.
\begin{thm}\label{SW}[Wiener-Schoenberg] \\
A sequence $\{A_n\}_{n=1}^{\infty} \subseteq [0,1]$ is equidistributed with respect to some positive continuous measure $F(x)dx,$ where $F(x) = \sum c_me(mx)$
if and only if  
$$c_m = \lim_{n\to \infty}\frac{1}{\#A_n}\sum_{t\in A_n}e(mt),$$
 exists for every integer $m$ and 
$$\lim_{V \to \infty} \frac{1}{V}\sum_{m=1}^V|c_m|^2 =0.$$ 
\end{thm}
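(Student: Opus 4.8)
The plan is to read this as a statement about the empirical measures $\mu_n := \frac{1}{\#A_n}\sum_{t\in A_n}\delta_t$ on the circle $\T=\R/\Z$, and to derive it from two classical ingredients: the Weyl criterion together with weak-$*$ compactness of probability measures, and Wiener's theorem detecting the atoms of a measure from the Ces\`aro mean of its squared Fourier coefficients. Throughout I would record that for any limiting measure $\mu$ one has $c_m = \int_{\T} e(mt)\,d\mu(t) = \hat\mu(-m)$, that $c_{-m} = \overline{c_m}$ because each $\mu_n$ (hence $\mu$) is a positive real measure, and that $c_0 = 1$. The proof then splits into two equivalences whose conjunction gives the asserted ``if and only if''.

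First I would treat the passage between Weyl limits and equidistribution, which is the Weyl criterion phrased for multisets. Each $\mu_n$ is a probability measure on the compact space $\T$, so by weak-$*$ compactness every subsequence of $\{\mu_n\}$ has a further weakly convergent subsequence. If all the $c_m$ exist, any weak limit $\nu$ satisfies $\hat\nu(-m) = \lim_k \frac{1}{\#A_{n_k}}\sum_{t} e(mt) = c_m$ for every $m$; since a finite measure on $\T$ is determined by its Fourier coefficients, all limit points coincide and $\mu_n\to\mu$ weakly. Approximating indicators of intervals above and below by continuous functions shows this weak convergence is precisely set-equidistribution. Conversely, equidistribution to a measure $\mu$ forces $c_m=\hat\mu(-m)$ to exist. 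This establishes that ``all $c_m$ exist'' is equivalent to ``$\{A_n\}$ equidistributes to a (unique) measure $\mu$ with $\hat\mu(-m)=c_m$''.

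With equidistribution to $\mu$ secured, the remaining content is to detect when $\mu$ is continuous, i.e.\ atom-free, and here the key is Wiener's theorem:
$$\lim_{N\to\infty}\frac{1}{2N+1}\sum_{m=-N}^{N}|\hat\mu(m)|^2 = \sum_{x\in\T}|\mu(\{x\})|^2.$$
I would prove this by writing $|\hat\mu(m)|^2 = \int_{\T}\int_{\T} e(m(x-y))\,d\mu(x)\,d\mu(y)$, averaging under the integral sign, and using that the symmetric kernel $\frac{1}{2N+1}\sum_{m=-N}^{N} e(mu)$ is bounded by $1$ and tends pointwise to $\mathbf{1}_{\{u\equiv 0\}}$; dominated convergence then collapses the double integral onto the diagonal, giving the $(\mu\times\mu)$-mass of $\{x=y\}$, which equals $\sum_x\mu(\{x\})^2$. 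Since $|\hat\mu(m)|^2=|c_{-m}|^2$ gives $\sum_{m=-N}^{N}|\hat\mu(m)|^2 = |c_0|^2 + 2\sum_{m=1}^{N}|c_m|^2$, we obtain
$$\frac{1}{2N+1}\sum_{m=-N}^{N}|\hat\mu(m)|^2 = \frac{|c_0|^2}{2N+1} + \frac{2}{2N+1}\sum_{m=1}^{N}|c_m|^2,$$
so the Wiener limit vanishes if and only if $\lim_V \frac{1}{V}\sum_{m=1}^{V}|c_m|^2 = 0$, i.e.\ if and only if $\mu$ is atom-free. Combined with the first equivalence, this yields both implications of the theorem simultaneously.

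I expect the genuine obstacle to be the proof of Wiener's formula, specifically justifying the interchange of the Ces\`aro average with the double integral and identifying the limit as the diagonal mass: the uniform boundedness of the Dirichlet-type kernel is what licenses dominated convergence, and its pointwise limit being the indicator of the diagonal is what isolates exactly the atomic part of $\mu$. A secondary point needing care is the identification of the limiting continuous measure with a density written formally as $F(x)=\sum_m c_m e(mx)$: Wiener's condition guarantees only that $\mu$ is atom-free, so I would read $F(x)\,dx$ as naming the continuous distribution whose Fourier--Stieltjes coefficients are the $c_m$, and state this identification explicitly rather than assert pointwise convergence of the Fourier series.
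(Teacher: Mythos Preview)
The paper does not actually prove this theorem: it is quoted as a known result with a reference to \cite[page no.\ 195]{RM}, and is then used as a black box in the proof of Theorem \ref{Measure}. So there is no argument in the paper to compare against.

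Your proposal is the standard and correct proof of the Wiener--Schoenberg criterion: weak-$*$ compactness plus uniqueness of Fourier--Stieltjes coefficients gives the equivalence between existence of all $c_m$ and equidistribution to some probability measure $\mu$, and Wiener's lemma on atoms then identifies the vanishing of the Ces\`aro mean of $|c_m|^2$ with $\mu$ being atom-free. Your treatment of Wiener's formula via Fubini and dominated convergence with the bounded Dirichlet-type kernel is exactly the usual one, and the reduction from the symmetric average over $|m|\le N$ to the one-sided average $\frac{1}{V}\sum_{m=1}^V|c_m|^2$ using $c_{-m}=\overline{c_m}$ is fine.

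You are also right to flag the gap between ``continuous measure'' (atom-free) and ``measure of the form $F(x)\,dx$'' (absolutely continuous with density $F$): Wiener's condition delivers only the former, whereas the paper's phrasing suggests the latter. Your reading of $F(x)\,dx$ as a formal name for the continuous distribution with Fourier--Stieltjes coefficients $c_m$ is the honest way to reconcile this, and is consistent with how the paper actually uses the theorem later (namely, only to conclude equidistribution with respect to \emph{some} continuous measure whose Fourier coefficients are the $C_m$).
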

\section{Fourier Analysis}

According to our need, let us recall some facts from Fourier Analysis in this section.  The reader may refer~\cite{Rudin} for detail study.
For our convenience, let us define:
$$e(x):=e^{2 \pi i x}.$$ 
Let $f$ be  a periodic and integrable function of period $1$ on $\R.$  The Fourier series of $f$ is given by 
$$f(x)= \sum_{n=- \infty}^{\infty}\hat{f}(n) e\left(n {x}\right),$$
where $\hat{f}(n)$ are called the Fourier coefficients, defined as  \\
$$\hat{f}(n):= \int_{0}^{1}f(x)e\left(-n {x} \right), \ n \in \Z.$$

Let $f_i, 1 \leq j \leq r$ be $r$  integrable function on $\R$ of period $1$.
Define the convolution 
of $r$ periodic integrable functions of period $1$ on $\R$, denoted as 
$$f_1*f_2* \dots *f_r: \R \rightarrow \C$$ as follows:
\begin{equation}\label{convolution}
f_1*\dots *f_r (y) 
\end{equation} 
$$=   \int_{0}^{1} \dots \int_{0}^{1}f_1(y_1)f_2(y_2) \dots f_r(y-y_1-y_2- \dots -y_{r-1})dy_{r-1}dy_{r-2} \dots dy_1.$$  
Among the many interesting properties that convolution of periodic integrable functions satisfies, the following property serves our purpose:
\begin{equation}\label{E20}
{(f_1 * f_2 * \dots * f_r)}^{\wedge}(n)
= \hat{f_1}(n) \hat{f_2}(n) \dots \hat{f_r}(n).
\end{equation}
In particular, for $r=2,$ 
$${(f_1*f_2)}^{\wedge}(n)=\hat{f_1}(n) \hat f_2(n).$$
The following theorem can be concluded from the famous Riesz-Fischer theorem (see~\cite[page no. 91]{Rudin}):
\begin{thm}\label{Riesz-Fischer}[Riesz-Fischer]\\
Let $\{a_n\}$ be a sequence of real numbers.
If $$\sum_{n=-\infty}^{\infty}|a_n|^2 < \infty,$$
 then there exists a unique periodic square Lebesgue integrable function $f$ that is $f \in L^2[0,1]$ such that 
$$f(x)= \sum_{n= -\infty}^{\infty} a_ne(nx),\ \ \text{where $\hat{f}(n)=a_n.$}$$
\end{thm}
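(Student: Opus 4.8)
The plan is to realize $f$ as the $L^2$-limit of the symmetric partial sums of the series $\sum_n a_n e(nx)$, exploiting that $L^2[0,1]$ is a complete inner-product space in which the exponentials $\{e(nx)\}_{n\in\Z}$ form an orthonormal system. First I would set $S_N(x)=\sum_{|n|\leq N}a_n e(nx)$ and, for integers $M<N$, compute
$$\|S_N-S_M\|_2^2=\int_0^1\Bigl|\sum_{M<|n|\leq N}a_n e(nx)\Bigr|^2\,dx=\sum_{M<|n|\leq N}|a_n|^2,$$
where the last equality is the orthonormality relation $\int_0^1 e(nx)\overline{e(mx)}\,dx=\delta_{nm}$ applied to a finite sum. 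Since $\sum_n|a_n|^2<\infty$ by hypothesis, the tail on the right tends to $0$ as $M,N\to\infty$, so $\{S_N\}$ is a Cauchy sequence in $L^2[0,1]$.

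Next I would invoke the completeness of $L^2[0,1]$, which is the analytic heart of the statement, to produce a limit $f\in L^2[0,1]$ with $\|S_N-f\|_2\to 0$. To check $\hat f(n)=a_n$, I would use that for each fixed $n$ the assignment $g\mapsto\hat g(n)=\int_0^1 g(x)e(-nx)\,dx$ is a bounded linear functional on $L^2[0,1]$ (by Cauchy--Schwarz, since $|e(-nx)|=1$), hence continuous; because $\widehat{S_N}(n)=a_n$ for every $N\geq|n|$, passing to the limit yields $\hat f(n)=a_n$. The displayed pointwise identity $f(x)=\sum_n a_n e(nx)$ would then be understood as $L^2$-convergence of the partial sums, which is the sense in which such series are used elsewhere in the paper.

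For uniqueness, I would suppose $f,g\in L^2[0,1]$ both have Fourier coefficients $a_n$ and set $h=f-g$, so that $\hat h(n)=0$ for all $n$. Parseval's identity, equivalently the completeness of the orthonormal system $\{e(nx)\}$, then gives $\|h\|_2^2=\sum_n|\hat h(n)|^2=0$, whence $f=g$ in $L^2[0,1]$.

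The main obstacle is conceptual rather than computational: the entire content rests on the completeness of $L^2[0,1]$ together with the fact that the exponentials form a complete orthonormal system, and since the result is quoted as a consequence of the classical Riesz--Fischer theorem I would take these two facts as given and present the derivation above as the standard argument. The only points that genuinely require care are that one never needs to interchange an infinite sum with an integral (all manipulations are carried out on finite partial sums, with the limit taken afterwards in $L^2$ and transported to the coefficients via continuity of the functional above), and that the stated series must be read as an $L^2$ identity rather than a pointwise one.
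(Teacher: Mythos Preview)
Your argument is correct and is the standard proof: use orthonormality to show the symmetric partial sums form a Cauchy sequence in $L^2[0,1]$, invoke completeness of $L^2$ to obtain $f$, pass to the limit in the coefficient functionals to identify $\hat f(n)=a_n$, and deduce uniqueness from Parseval. There is nothing to compare against, however, because the paper does not prove this statement at all; it simply quotes it as a consequence of the classical Riesz--Fischer theorem with a reference to Rudin. So your proposal supplies more than the paper does, and what you supply is exactly the textbook derivation one would find behind that citation.
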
  
\section{Beurling-Selberg polynomials}\label{BS}
Let $\chi_I(x)$ be the characteristic function of the interval $[a,b] \subseteq \R$.
 For a positive integer $M$, define ${\vartriangle_M}(x)$ to be the Fejer's Kernel, defined as below: \\
$${\vartriangle_{M}}(x)= \sum_{|n|<M}\left(1-\frac{|n|}{M}\right)e(nx)=\frac{1}{M} \left( \frac{sin \ \pi Mx}{sin \ \pi x}\right)^2.$$
The $M^{th}$-order Beurling polynomial is defined as follows: \\

$${B_M}^{*}(x)=\frac {1}{M+1} \sum_{n=1}^M\left(\frac{n}{M+1}-\frac{1}{2}\right)\vartriangle_M\left(x-\frac{n}{M+1}\right)$$
$$+\frac{1}{2\pi(M+1)} \sin(2\pi(M+1)x)   
-\frac{1}{2\pi}\vartriangle_{M+1}(x) \sin \ 2 \pi x + \frac{1}{2(M+1)}\vartriangle_{M+1}(x).$$

For an interval $[a,b]$, the $M^{th}$ order Selberg polynomial is defined as below: \\

$$S^{+}_M(x)=b-a+B^{*}_M(x-b)+B^{*}_M(a-x).$$

$$S^{-}_M(x)=b-a+B^{*}_M(b-x)+B^{*}_M(x-a).$$

It is clear that both the above polynomials are trigonometric polynomials of degree at most $M$. 
From the work of Vaaler (see~\cite{Vaaler}), for all $M \geq 1,$ we have \\

\noindent \textbf{(a)} 
For a subinterval $[a,b]$ of $\R$, \\

$S_{M}^{-}(x) \leq \chi_I(x) \leq S_{M}^{+}(x) $. \\

\noindent \textbf{(b)} 
$S_{M}^{\pm}(x)=  \sum_{0\leq |m|\leq M} \hat S_{M}^{\pm}(m)e(mx)$, 
where \\

$\hat S_{M}^{\pm}(0)=b-a \pm \frac{1}{M+1}.$ \\ 

For $0<|m| <M$, \\

$|\hat {S}_M^{\pm}(m)| \leq \frac{1}{M+1}+ min \left \{b-a, \frac{1}{\pi|m|}\right \}.$ \\

\noindent \textbf{(c)} 
$||S_M^{+}-\chi(x)||_{L_1} \leq \frac{1}{M+1}.$  \\

\noindent \textbf{(d)}
For $n \neq 0$, note that
$$|\hat{\chi_{I}}(n)|=\left|\frac{\sin \ \pi n (b-a)}{\pi k} \right| \leq \text{min} \left(b-a, \frac{1}{\pi |n|} \right).$$ 

For $0< |n| < M$,
$$|\hat{S}^{\pm}_M(n)| \leq \frac{1}{M+1}+ \text{min} \left(b-a,\frac{1}{\pi |n|} \right).$$

\section{Proofs of Theorems}\label{proof}

\noindent\textbf{Proof of Theorem \ref{T1}}.  \\  
By the definition of the Weyl limits, we know

$$ C_m :  =  \lim_{n \to \infty} 
 \frac{1}{\prod_{j=1}^{r} \# X_{n}^{(j)} } \sum_{{x_{j} \in X_{n}^{(j)} } \atop {1 \leq j \leq r}} 
   e\left(m\left\{{ x_{1} + x_{2} + \dots + x_{r}}\right\}\right)  $$ 
 Observe that 
 $$e(mx)=e(m\{x\}).$$
 Using the above observation, we have
\begin{eqnarray*}
 C_m & = & \lim_{n  \to \infty}\frac{1}{\prod_{j=1}^{r} \# X_{n}^{(j)} } \prod_{j=1}^r \sum_{x_j \in X_{n}^{(j)}}e( m x_j) \\ 
 & = & \prod_{j=1}^{r}c_{m}^{(j)}. 
  \end{eqnarray*}
 In particular, for $A_{n}^{(j)}= \{\pm x_j\} \subseteq [-\frac{1}{2}, \frac{1}{2}]$ the above calculation follows immediately, that is
 for any non zero integer $m$, if $c_{m}^{(j)}$ be the $m^{th}$ Weyl limit of the family $\{\pm x_j, \  x_j \in X_{n}^{(j)}, \ 1 \leq j \leq r\}$ and $C_m$ be the $m^{th}$ Weyl limit of the family $\{\{ x_1 + x_2 + \dots + x_r \}, \ x_j \in A_{n}^{(j)}, \  1 \leq j \leq r\}$, then
 $$C_m=\prod_{j=1}^{r}c_{m}^{(j)}.$$
\noindent \textbf{Proof of Theorem \ref{Measure}}. \\  
Since $\{A_{n}^{(j)}\}$ are equidistributed in $[-\frac{1}{2}, \frac{1}{2}],$ with respect to the measure $F_j(x)dx,$ by Theorem \ref{SW} we have,
$$\lim_{V \to \infty} \frac{1}{V}\sum_{|m|\leq V} |c_{m}^{(j)}|^2=0  \  \text{for $1 \leq j \leq r$}.$$
 Hence, $|c_{m}^{(j)}|<1$, except possibly for finitely many $m$. \\
Now using Theorem \ref{T1} and above fact, we have 
$$|C_m| < |c_{m}^{(j)}|$$
 except possibly for finitely many $m$ and for all $1 \leq j \leq r. $  
 Hence,
\begin{equation}\label{E25}
\lim_{V \to \infty} \frac{1}{V}\sum_{|m|\leq V}|C_m|^2 =0.
\end{equation}
Hence, by Theorem \ref{SW}, we can conclude that   
$$\left\{\{ x_1 + x_2 +  \dots + x_r\}\right\}$$
 is equidistributed in $[0,1]$ with respect to the measure 
$$\mu=F(x)dx,$$
where 
$$F(x)dx=\sum_{m=-\infty}^{\infty} C_me(mx).$$
In addition, if the concerned family satisfies (\ref{E0}), that is
  $$\sum_{m=-\infty}^{\infty} |c_{m}^{(j)}|^2 < \infty, 1 \leq j \leq r,$$
  then $|c_{m}^{(j)}|<1,$ except possibly for finitely many $m .$ \\ 
  Using Theorem \ref{T1} again, we have    
 $$\sum_{m=-\infty}^{\infty}|C_m|^2 < \infty.$$
  Hence, by Theorem \ref{Riesz-Fischer},
 there exists a function $F \in L^{2}([0,1])$ such that 
 $$F(x)=\sum_{m}C_m e(mx).$$
 But note that 
 $$C_m=\prod_{j=1}^rc_{m}^{(j)} \ \text{and} \ \prod_{j=1}^{r}\hat {F_j}(m)
 = {(F_1*F_2* \dots *F_r)}^{\wedge}(m).$$
 Hence,
 $$ F(x)=  F_1 * F_2* \dots * F_r(x).$$
 
\noindent \textbf{Proof of Theorem \ref{Main}}. \\  
In~\cite{Montgomery} Montgomery gives a proof of the classical Erd\"{o}s-Tur\'{a}n inequality using Beurling-Selberg polynomials (see~\cite[Theorem 11.4.8]{RM}).  Generalizing the idea implicit in Montgomery's work, Murty and Sinha have proved a varient of Erd\"{o}s-Tur\'{a}n inequality (see~\cite[Theorem 8]{MS}).  Following the same path we prove Theorem \ref{Main}.
Let $\chi_{I}$ be the characteristic function of the interval $I$.
Then by \textbf{(a)} of Section \ref{BS}, we have
$$\sum_{{x_j \in A_{n}^{(j)} \atop {1 \leq j \leq r}}} S_M^{-}(x_{\underline n }) \leq
 \sum_{{x_j \in A_{n}^{(j)} \atop {1 \leq j \leq r}}}\chi_{I}
(x_{\underline n })
\leq \sum_{{x_j \in A_{n}^{(j)} \atop {1 \leq j \leq r}}} S_{M}^{+}(x_{\underline n }).$$
Now using the Fourier expansion of $S_M^{\pm}(x_{\underline n})$, we know that
$$\sum_{{x_j \in A_{n}^{(j)}}\atop {1 \leq j \leq r}} S_{M}^{\pm}(x_{\underline n })
=  \sum_{|m| \leq M} \hat {S}_{M}^{\pm}(m)
\left( \prod_{j=1}^r \sum_{x_j \in A_{n}^{(j)}} e(\pm m x_j ) \right).$$ 
Subtracting $\prod_{j=1}^r \#A_{n}^{(j)} c_{m}^{(j)} $ from the inner exponential sums, we get
\begin{equation}\label{E42}
  \sum_{x_j \in A_{n}^{(j)}\atop {1 \leq j \leq r}} S_{M}^{\pm} ( x_{\underline n})
- \left(\prod_{j=1}^r \# A_{n}^{(j)}\right) \sum_{|m| \leq M} {\hat{S}}_{M}^{\pm}(m) C_m  
\end{equation}
$$ =    \sum_{|m| \leq M} \hat {S}_{M}^{\pm}(m) \left( \prod_{j=1}^r  \sum_{x_j \in A_{n}^{(j)}} e(\pm m x_j )- \prod_{j=1}^r \#A_{n}^{(j)} c_{m}^{(j)} \right).$$

Since $c_{i_0}=1,$ 
$$ \left( \prod_{j=1}^r  \sum_{x_j \in A_{n}^{(j)}} e(\pm m x_j )- \prod_{j=1}^r \#A_{n}^{(j)} c_{m}^{(j)} \right)=0 \ \text{for $m=0$}.$$
Taking the absolute value on both sides we get

$$  \left | \sum_{x_j \in A_{n}^{(j)}\atop {1 \leq j \leq r}} S_{M}^{\pm} ( x_{\underline n})
-\prod_{j=1}^r \# A_{n}^{(j)} \sum_{|m| \leq M} {\hat{S}}_{M}^{\pm}(m) C_m  \right |$$
$$\leq   \sum_{|m| \leq M} |\hat {S}_{M}^{\pm}(m) | \left | \prod_{j=1}^r \sum_{x_j \in A_{n}^{(j)}}e( \pm m x_{i})- \prod_{j=1}^r \#A_{n}^{(j)} c_{m}^{(j)} \right | .$$

Now let us consider the sum 
$$\sum_{|m| \leq M} \hat {S}_{M}^{\pm}(m) C_m.$$
Since for all $|m| >M$, $\hat {S}_{M}^{\pm}(m)=0,$
 without loss of generality let us extend the range of sums to $\Z.$ Then, we have
 $$\sum_{m}\hat S_{M}^{\pm}(m)C_m=\sum_{m} C_m \int_{0}^1 
 S_{M}^{\pm}(x)e(-m x)dx.$$
 Now interchanging the sum and integral and using the definition of $\mu$, the above quantity equals 
$$  \int_{0}^1 S_{M}^{\pm}(x) d \mu.$$
 Using \textbf{(c)} of Section \ref{BS}, we have
\begin{equation}\label{E41}
 \left|\int_{0}^1 (S_{M}^{\pm}(x)-\chi_{I}(x)) d \mu \right| \leq \frac{\|\mu\|}{M+1}.
 \end{equation}
 Note that
 $$D_{I,V_n}(\mu)= \left|N_{I}(V)-\left(\prod_{j=1}^r A_{n}^{(j)}\right) \mu (I) \right|$$
 
 $$=\left |\sum_{x_j \in A_{i_r}\atop {1 \leq j \leq r}} \chi_I(x_{\underline n})- \prod_{j=1}^r (\# A_{n}^{(j)}) \int_0^1 \chi_{I}(x)d \mu \right|.$$
 Now adding and subtracting  $\prod_{j=1}^{r}(\# A_{n}^{(j)}) \int_{0}^1S_M^{+}(x) d \mu$ to the above expression, we get
  $$= \left|\prod_{j=1}^r \sum_{x_j \in A_{n}^{(j)}} \chi_{I}(x_{\underline n})-\prod_{j=1}^{r}(\# A_{n}^{(j)}) \int_{0}^1S_M^{+}(x) d \mu \right.$$
  $$\left. +\prod_{j=1}^{r}(\# A_{n}^{(j)}) \int_{0}^1S_M^{+}(x) d \mu-\prod_{j=1}^r(\# A_{n}^{(j)})\int_{0}^1\chi_I(x)d \mu \right |.$$
 Using triangle inequality, we get 
  $$D_{I,V}(\mu) \leq \left| \prod_{j=1}^r \sum_{x_j \in A_{n}^{(j)}} \chi_I(x_{\underline n})-\prod_{j=1}^{r}(\# A_{n}^{(j)}) \int_{0}^1S_M^{+}(x) d \mu \right|$$
  $$+\left| \prod_{j=1}^{r}(\# A_{n}^{(j)}) \int_{0}^1S_M^{+}(x) d \mu-\prod_{j=1}^r( \# A_{n}^{(j)})\int_{0}^1\chi_I(x)d \mu \right|$$
  $$\leq \left| \prod_{j=1}^r \sum_{x_j \in A_{n}^{(j)}} \chi_I(x_{\underline n})-\prod_{j=1}^{r}(\# A_{n}^{(j)}) \int_{0}^1S_M^{+}(x) d \mu \right|$$
  $$+\left|\prod_{j=1}^r (\# A_{n}^{(j)}) \int_{0}^1 (S_M^{+}(x)- \chi_{I}(x))d \mu \right|.$$
  Now using (\ref{E41}), the above is 
  $$\leq \frac{\prod_{j=1}^r(\# A_{n}^{(j)} \|\mu\|)}{M+1}
  +\left| \prod_{j=1}^r\sum_{x_j \in A_{n}^{(j)}} \chi_{I}(x_{\underline n})- \prod_{j=1}^r(\# A_{n}^{(j)})\int_{0}^1 S_M^{+}(x)d \mu\right|.$$
  Using \textbf{(a)} of Section \ref{BS}, we have 
  $$\leq \frac{\prod_{j=1}^r(\# A_{n}^{(j)} \|\mu\|)}{M+1}$$
  $$+ \left|\prod_{j=1}^r \sum_{x_j \in A_{n}^{(j)}}S_M^{+}(x_{\underline n})-\prod_{j=1}^r \# A_{n}^{(j)} \int_{0}^1 S_M^{+}(x)d \mu \right|.$$ 
  Using (\ref{E42}) and the fact that 
  $$\int_{0}^1 S_M^{+}(x) d \mu = \sum_{|m| \leq M } \hat{S}_M^{+}(m) C_m,$$
  $D_{I,V}(\mu)$ is
  $$ \leq \frac{\prod_{j=1}^r(\# A_{n}^{(j)} \|\mu\|)}{M+1}+ \sum_{|m| \leq M} \hat {S}_{M}^{\pm}(m) \left( \prod_{j=1}^r  \sum_{x_j \in A_{n}^{(j)}} e(\pm m x_j )- \prod_{j=1}^r \#A_{n}^{(j)} c_{m}^{(j)} \right) .$$
  Now using \textbf{(b)} of Section \ref{BS}, we have 
  $$\left|D_{I,V}(\mu)\right| \leq \frac{\prod_{j=1}^r(\# A_{n}^{(j)} \|\mu\|)}{M+1}$$
  $$+ \sum_{|m| \leq M} \frac{1}{M+1}+ min\left(b-a, \frac{1}{\pi |m|} \right)
  \left | \prod_{j=1}^r \sum_{x_j \in A_{n}^{(j)}}e(m x_j)- \prod_{j=1}^r (\# A_{n}^{(j)}) C_m\right |.$$
  Since $\|\mu\|=1$, we get the required result.
\section{Applications}\label{App}
In this section, we give several applications of our Theorems.  
For the first application $(i)$, we use the notations from Section \ref{Intro}.  \\ \\ 
\noindent\textbf{(i)}  
\noindent\textbf{Distribution of gaps of eigenangles of normalized Hecke operators $T_p^{'}$ acting on the space of cusp forms of level $N$ and weight $k$.}  \\  \\
The recently proved Sato-Tate conjecture in a series of papers by Barnet-Lamb, Geraghty, Harris, Shepherd-Barron 
 and Taylor~\cite{BL},~\cite{LMR},~\cite{MNR} says that
if $1 \leq i \leq s(N,k)$ be the eigenvalue of a non CM cusp form, then the family 
$\{a_{i}(p)\}$ is equidistributed in $[-2,2]$ as  $p \to \infty$ and $(p,N)=1$ with respect to the 
Sato-Tate measure
$$d\mu_{\infty}=F(x)dx,$$
where $$F(x)=\frac{1}{2 \pi} \sqrt {4-x^2}.$$
In 1997, Serre~\cite{Serre} studied the ``vertical" Sato-Tate conjecture by fixing a prime $p$ and varying $N$ and $k.$
In particular, 
he proved the following result: 
Let $N_{\lambda},k_{\lambda}$ be positive integers such that $k_{\lambda}$ is even, 
$N_{\lambda}+k_{\lambda} \to \infty$ and $p$ is 
a prime not dividing $N_{\lambda}$ for any ${\lambda}$.  Then the family of eigenvalues, 
$$\{a_{i}(p), 1 \leq i \leq s(N,k)\}$$
of the normalized $p^{th}$ Hecke
operator 
$$T_{p}^{'}(N_{\lambda},k_{\lambda})= \frac{T_{p}(N_{\lambda},k_{\lambda})}{p^{\frac{k_{\lambda}-1}{2}}}$$
is equidistributed in the interval $\Omega=[-2,2]$ with respect to the measure
$$\mu_p:=F(x)dx,$$
where $$F(x)=\frac{p+1}{\pi}\frac{\sqrt{1-\frac{x^2}{4}}}{(p^{\frac{1}{2}}+p^{-\frac{1}{2}})^2-x^2}.$$
\begin{remark}
Also in 1997, Conrey, Duke and Farmer~\cite{CDF}  studied a special case of above result by fixing $N=1.$ 
\end{remark}
In 2009, Murty and Sinha~\cite{MS} obtained the effective version of above result.  Precisely, they proved the following:  
Let p be a fixed prime.  Let $\{(N,k)\}$ be a sequence of pairs of positive integers such that $k$ is even, $p$ does not divide $N$ and $N+k \to \infty$.  For any interval $[\alpha,\beta] \subseteq [-2,2]$ and a pair $N,k$,
$$\frac{1}{s(N,k)} \# \left \{1 \leq i \leq s(N,k):a_{i}(p) \in [\alpha,\beta] \right \}
=\int_{\alpha}^{\beta}\mu_{p} +\O \left(\frac{\log \, p}{\log \, kN} \right).$$
Let $n=s(N,k)$ and for all $1 \leq j \leq r,$
$$A_{n}^{(j)}=\left \{\frac{\theta_{i}^{(j)}(p)}{2 \pi}, \ 1\leq i \leq s(N,k) \right \}.$$
Note that $$\# A_{n}^{(j)}=n=s(N,k).$$
So as $N+k \to \infty,$ we have $n \to \infty$ that is we have infinite number of multi sets.  Since each sets are same and we are going to study the distributions of gaps between the elements of the multisets $A_{n}^{(j)}.$
$$A_{n}^{(j)}=\left \{\frac{\theta_{i}^{(j)}(p)}{2 \pi}, \ 1\leq i \leq s(N,k) \right \},$$
where $\theta_{i}^{(j)}(p) \in [0,\pi]$ such that $a_{i}^{(j)}(p)= 2 \cos \theta_{i}(p).$
As applications of our theorems, we have following theorem: 
\begin{thm}\label{pair-correlation}
Let $N$ be a positive integer and $p$ a prime number not dividing $N$.  For any interval 
$[\alpha,\beta] \subseteq [0,1]$, $r \leq s(N,k),$  
$$\frac{1}{s(N,k)^r}\#\left\{1 \leq i^{(1)},...,i^{(r)} \leq s(N,k):\, 
{ \left\{\frac{\pm \theta_{i}^{(1)}(p)  \pm \dots\pm \theta_{i}^{(r)}(p)}{2 \pi}\right\}} \in [\alpha,\beta] \right\}$$
$$= \int_{[\alpha,\beta] }\nu_p + \O \left(\frac{\log \, p}{ \log \, kN} \right),$$
where
$$\nu_p = F(x) * F(x) *\dots* F(x)dx,$$
and
 $$F(x)=4(p+1) \frac{\sin ^{2} 2 \pi x}{\left(p^{\frac{1}{2}}+p^{-\frac{1}{2}}\right)^2- \cos ^2 2 \pi x}.$$
 Here the implied constant is effectively computable.
\end{thm}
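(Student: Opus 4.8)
The plan is to reduce everything to Theorems \ref{T1}, \ref{Measure} and \ref{Main}, applied to the single multiset recording the symmetrised Satake angles at $p$. Put $n=s(N,k)$ and take
$$A_{n}^{(1)}=A_{n}^{(2)}=\cdots=A_{n}^{(r)}:=\left\{\tfrac{\pm\theta_{i}(p)}{2\pi}:1\le i\le s(N,k)\right\}\subseteq\left[-\tfrac12,\tfrac12\right],$$
which is symmetric under $x\mapsto-x$ and whose cardinality tends to infinity as $N+k\to\infty$; as $\underline{x}=(x_{1},\dots,x_{r})$ ranges over $A_{n}^{(1)}\times\cdots\times A_{n}^{(r)}$, the sum $x_{1}+\cdots+x_{r}$ runs through exactly the quantities $(\pm\theta_{i^{(1)}}(p)\pm\cdots\pm\theta_{i^{(r)}}(p))/2\pi$ of the statement. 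First I would pin down the limiting measure: by Serre's vertical Sato--Tate theorem \cite{Serre}, the normalised eigenvalues $a_{i}(p)/p^{(k-1)/2}$ are equidistributed in $[-2,2]$ against $\mu_{p}$, so the substitution $a=2\cos\theta$, $x=\theta/2\pi$, together with the symmetrisation $x\mapsto-x$, push $\mu_{p}$ forward to $F(x)\,dx$ on $[-\tfrac12,\tfrac12]$ with $F$ the density displayed in the statement, and a direct Fourier computation gives the Weyl limits $c_{m}^{(j)}=c_{m}=\int_{-1/2}^{1/2}F(x)e(mx)\,dx$. Since the denominator $(p^{1/2}+p^{-1/2})^{2}-\cos^{2}2\pi x$ is bounded away from $0$, $F$ is bounded, so $|c_{m}|\le1$ and $\sum_{m}|c_{m}|^{2}=\|F\|_{2}^{2}<\infty$; thus \eqref{E0} holds and Theorems \ref{T1} and \ref{Measure} identify the limiting measure of $\{\{x_{1}+\cdots+x_{r}\}\}$ as $\nu_{p}=F*F*\cdots*F\,dx$, precisely the $\nu_{p}$ of the statement (with $C_{m}=c_{m}^{r}$).

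For the error term I would bring in the effective trace-formula estimate of Murty and Sinha \cite{MS}. With $S_{m}:=\sum_{t\in A_{n}^{(j)}}e(mt)=2\sum_{i=1}^{s(N,k)}\cos\bigl(m\theta_{i}(p)\bigr)$ (the same for all $j$), writing $2\cos m\theta$ through Chebyshev polynomials of the second kind shows that $\sum_{i}U_{\ell}(\cos\theta_{i}(p))$ is the trace of the normalised Hecke operator $T_{p^{\ell}}'$ on $S(N,k)$. The analysis underlying the effective equidistribution theorem of Murty and Sinha \cite{MS}, via the Eichler--Selberg trace formula, then gives for each $\ell\ge1$ an identity $\tfrac{1}{s(N,k)}\operatorname{tr}T_{p^{\ell}}'=(\text{main term})+\O\bigl(p^{\ell/2+\varepsilon}/(kN)\bigr)$ with effectively computable implied constant, whence $\tfrac{1}{\#A_{n}^{(j)}}S_{m}=c_{m}+E_{m}$ with $|E_{m}|\ll p^{|m|/2+\varepsilon}/(kN)$. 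This is the origin of the effective constant in the conclusion.

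Finally I would feed this into Theorem \ref{Main} with the choice $M:=\bigl\lfloor\log(kN)/(2\log p)\bigr\rfloor$ (for the finitely many pairs $(N,k)$ with $\log(kN)<2\log p$ the asserted bound is trivial). Since the $r$ multisets coincide, $\prod_{j}S_{m}=(\#A_{n}^{(j)})^{r}(c_{m}+E_{m})^{r}$, and for $|m|\le M$ we have $|E_{m}|\ll(kN)^{-1/2}\le1$, so a binomial expansion together with $|c_{m}|,|E_{m}|\le1$ gives
$$\Bigl|\prod_{j=1}^{r}S_{m}-\prod_{j=1}^{r}(\#A_{n}^{(j)})\,c_{m}\Bigr|=(\#A_{n}^{(j)})^{r}\bigl|(c_{m}+E_{m})^{r}-c_{m}^{r}\bigr|\ll_{r}(\#A_{n}^{(j)})^{r}\,|E_{m}|.$$
Dividing the inequality of Theorem \ref{Main} through by $\prod_{j}\#A_{n}^{(j)}$, its left-hand side becomes the normalised count minus $\int_{I}\nu_{p}$, while its right-hand side is at most
$$\frac{1}{M+1}+\O_{r}\!\Bigl(\sum_{1\le|m|\le M}\bigl(\tfrac{1}{M+1}+\tfrac{1}{\pi|m|}\bigr)|E_{m}|\Bigr)\ll\frac{\log p}{\log kN}+\O_{r}\!\Bigl(\frac{p^{M/2+\varepsilon}}{kN}\Bigr)\ll\frac{\log p}{\log kN},$$
because $\sum_{m\le M}(M^{-1}+m^{-1})p^{m/2+\varepsilon}$ is dominated by its top term $\ll p^{M/2+\varepsilon}\ll(kN)^{1/4+\varepsilon}=\o(\log p/\log kN)$, while $\tfrac{1}{M+1}\asymp\log p/\log kN$. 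This gives the stated estimate with effectively computable implied constant. The only genuinely non-formal ingredient is the uniform trace-formula bound on $E_{m}$ for $m$ up to $M\asymp\log(kN)/\log p$; everything else is the change of variables, the binomial estimate above, and the optimisation of $M$, so I expect that step — together with the careful bookkeeping of the push-forward density $F$ and the multiplicities forced by the $\pm$ signs — to be where the real work lies.
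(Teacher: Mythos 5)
Your proposal follows essentially the same route as the paper: identify the limiting measure $\nu_p=F*\cdots*F\,dx$ via Serre/Murty--Sinha and Theorems \ref{T1}--\ref{Measure}, bound $\bigl|\prod_j S_m-(\#A_n^{(j)})^r c_m^r\bigr|$ by a single factor of the trace-formula error (the paper's Proposition \ref{P10} does this with the inequality $(a-b)^r\le r(a^r+b^r)$ where you use a binomial expansion), and feed this into Theorem \ref{Main} with $M\asymp\log(kN)/\log p$. The one inaccuracy is that the Murty--Sinha estimate actually gives an unnormalised error $\ll p^{3m/2}m(\log p)2^{\nu(N)}+\sqrt{N}d(N)$ rather than your $p^{m/2+\varepsilon}$, but since your $M=\lfloor\log(kN)/(2\log p)\rfloor$ still makes $p^{3M/2+\varepsilon}/(kN)=\o(\log p/\log kN)$, the argument is unaffected.
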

\begin{remark}
For $r=2,$ the above mentioned measure 
$$\nu_p=\frac{2(1+\cos4 \pi x)(1-\frac{1}{p^2})+\frac{4}{p}( \frac{1}{p^2}-\cos4\pi x)}
{1+\frac{1}{p^4}-\frac{2}{p^2}\cos4\pi x} dx.$$
\end{remark}
 The following theorem can be deduced from Theorem \ref{pair-correlation}.
 \begin{thm}\label{T15}
 For any $\alpha \in [0,1],$
$$\#\left\{1 \leq i^{(1)},i^{(2)},\dots,i^{(r)} \leq s(N,k):\, 
{ \left\{\frac{\pm \theta_{i}^{(1)}(p)  \pm \dots\pm \theta_{i}^{(r)}(p)}{2 \pi}\right\}} = \alpha \right \}$$
$$= \O \left( (s(N,k))^r\left(\frac{\log \, p}{\log \,kN} \right)\right),$$
where the implied constant is effectively computable.
\end{thm}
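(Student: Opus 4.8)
The plan is to deduce Theorem \ref{T15} from Theorem \ref{pair-correlation} by the standard trick of taking a shrinking interval around the point $\alpha$. Fix $\alpha \in [0,1]$ and for a parameter $\delta > 0$ consider the interval $I_\delta = [\alpha - \delta, \alpha + \delta] \cap [0,1]$ (or, working modulo $1$, the arc of length $2\delta$ centered at $\alpha$, so that the endpoint effects near $0$ and $1$ are harmless). The set whose cardinality we wish to bound is contained in
$$\left\{1 \leq i^{(1)},\dots,i^{(r)} \leq s(N,k):\ \left\{\tfrac{\pm\theta_i^{(1)}(p)\pm\cdots\pm\theta_i^{(r)}(p)}{2\pi}\right\} \in I_\delta\right\},$$
because $\alpha \in I_\delta$. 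Applying Theorem \ref{pair-correlation} to $I_\delta$ gives that this larger count equals
$$s(N,k)^r\left(\int_{I_\delta}\nu_p + \O\!\left(\frac{\log p}{\log kN}\right)\right).$$

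Next I would estimate $\int_{I_\delta}\nu_p$. Since $\nu_p = F * F * \cdots * F\, dx$ is a convolution of the bounded continuous density $F$, it is itself given by a bounded continuous (indeed real-analytic) density, so $\int_{I_\delta}\nu_p \leq C_p\cdot 2\delta$ where $C_p = \|F * \cdots * F\|_\infty$ is a finite constant depending only on $p$ and $r$; in fact one can bound it crudely by $\|F\|_\infty^r$ after noting $F \geq 0$ and $\int_0^1 F = 1$, or simply observe $\|F*G\|_\infty \le \|F\|_\infty$ when $\int G = 1$, $G \ge 0$. Combining, the count is $\leq s(N,k)^r\left(2C_p\delta + \O(\log p/\log kN)\right)$.

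Finally I would choose $\delta$ to balance the two terms, e.g. $\delta = \log p/\log kN$ (note this is small once $kN$ is large, which is the regime of interest; for small $kN$ the bound is trivial since the count is at most $s(N,k)^r$), which yields that the count in Theorem \ref{T15} is $\O\!\left(s(N,k)^r \cdot \frac{\log p}{\log kN}\right)$ with an effectively computable implied constant, since the constant $C_p$ and the implied constant from Theorem \ref{pair-correlation} are both effective. This completes the deduction.

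The only genuine subtlety — and thus the step I would be most careful about — is the claim that $\nu_p$ has a bounded density near the point $\alpha$, uniformly enough that $\int_{I_\delta}\nu_p = O(\delta)$ with a constant independent of $\alpha$; this is where one uses that $F$ is bounded and that convolution of periodic integrable densities against a probability density does not increase the sup-norm, so $\|F * \cdots * F\|_\infty \le \|F\|_\infty < \infty$. Everything else is a routine interval-shrinking argument, and in particular no new equidistribution input beyond Theorem \ref{pair-correlation} is needed.
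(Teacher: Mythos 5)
Your proposal is correct and is precisely the standard deduction the paper intends: the paper offers no written proof of Theorem \ref{T15}, stating only that it ``can be deduced from Theorem \ref{pair-correlation},'' and the interval-shrinking argument with $\delta \asymp \log p/\log kN$, together with the observation that $\nu_p$ has bounded density (via $\|F*\cdots*F\|_\infty \le \|F\|_\infty < \infty$), is exactly how that deduction goes. No gaps.
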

In the above theorem for $r=2$, we have an interesting consequence, namely  
\begin{thm}\label{T12}
 For any $\alpha \in [0,1],$
$$\#\left \{(i^{(1)},i^{(2)}):  \left\{\frac{\pm \theta_{i}^{(1)}(p) \pm \theta_{i}^{(2)}(p)}{2 \pi} \right\} = \alpha \right \}= \O \left(  (s(N,k))^2\left(\frac{\log \,p}{\log kN} \right)\right).$$
\end{thm}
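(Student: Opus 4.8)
The plan is to obtain Theorem \ref{T12} as the $r=2$ specialization of Theorem \ref{T15}, which in turn is the ``point mass'' ($\alpha=\beta$) case of Theorem \ref{pair-correlation}. So the real work is isolating what happens to the right-hand side of Theorem \ref{pair-correlation} when the interval $[\alpha,\beta]$ degenerates to a single point $\{\alpha\}$. First I would observe that a counting quantity
$$\#\left\{(i^{(1)},i^{(2)}):\ \left\{\tfrac{\pm\theta_i^{(1)}(p)\pm\theta_i^{(2)}(p)}{2\pi}\right\}=\alpha\right\}$$
is bounded above by
$$\#\left\{(i^{(1)},i^{(2)}):\ \left\{\tfrac{\pm\theta_i^{(1)}(p)\pm\theta_i^{(2)}(p)}{2\pi}\right\}\in[\alpha,\alpha+\delta]\right\}$$
for every $\delta>0$, since a single point lies in every such interval. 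Applying Theorem \ref{pair-correlation} with $r=2$, $N$, $k$, and the interval $I_\delta=[\alpha,\alpha+\delta]\cap[0,1]$, the count is at most $s(N,k)^2\int_{I_\delta}\nu_p+\O\!\left(s(N,k)^2\frac{\log p}{\log kN}\right)$.

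The key point is then that $\nu_p$ is an absolutely continuous measure with a \emph{bounded} density: $\nu_p = (F*F)(x)\,dx$ where $F$ is the explicit nonnegative trigonometric-type function displayed in Theorem \ref{pair-correlation}, and the $r=2$ remark even gives the density in closed form. Since $F\in L^2[0,1]$ (its Fourier coefficients are the Weyl limits $c_m^{(j)}$, which satisfy \eqref{E0} in the cases of interest), the convolution $F*F$ is continuous on $[0,1]$, hence bounded by some constant $C_p$ depending only on $p$. Therefore $\int_{I_\delta}\nu_p\le C_p\,\delta$, and letting $\delta\to 0$ kills this term. What survives is exactly
$$\#\left\{(i^{(1)},i^{(2)}):\ \left\{\tfrac{\pm\theta_i^{(1)}(p)\pm\theta_i^{(2)}(p)}{2\pi}\right\}=\alpha\right\}=\O\!\left(s(N,k)^2\,\frac{\log p}{\log kN}\right),$$
with the implied constant effectively computable because the one in Theorem \ref{pair-correlation} is and because $C_p$ can be bounded explicitly from the closed-form density. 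The reduction from Theorem \ref{T15} to Theorem \ref{T12} is then trivial: set $r=2$ and relabel.

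The main obstacle, modest as it is, is a bookkeeping one: the error term $\O(s(N,k)^2\frac{\log p}{\log kN})$ in Theorem \ref{pair-correlation} is uniform in the interval $[\alpha,\beta]$, so one must check that shrinking the interval to a point does not secretly inflate the implied constant; this is fine precisely because Vaaler's bounds on the Selberg polynomials used in Theorem \ref{Main} control $\hat S_M^{\pm}(m)$ by $\frac{1}{M+1}+\min(\beta-\alpha,\frac{1}{\pi|m|})$, and dropping the $\min$ to $\frac{1}{\pi|m|}$ only weakens the estimate by a bounded factor. A secondary point worth a sentence is to note that the count could in principle also be handled directly — a point mass $\alpha$ contributes to the limiting measure $\nu_p$ with weight zero, so by the effective estimate the number of pairs hitting it is $o(s(N,k)^2)$ — but passing through Theorem \ref{pair-correlation} is cleaner and gives the sharper $\frac{\log p}{\log kN}$ savings for free. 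One should also remark that the statement subsumes the Murty--Srinivas result quoted in the introduction (the case $\alpha=0$ with the $\pm$ signs chosen as $\theta_i^{(1)}\pm\theta_i^{(2)}=0$), which provides a useful sanity check on the normalization.
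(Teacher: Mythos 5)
Your proposal is correct and follows the same route the paper intends: Theorem \ref{T12} is the $r=2$ case of Theorem \ref{T15}, which is deduced from Theorem \ref{pair-correlation} by shrinking the interval to a point and using that $\nu_p$ has a bounded continuous density together with the uniformity of the error term. The paper leaves this deduction implicit ("can be deduced from Theorem \ref{pair-correlation}"), and your write-up supplies exactly the missing details.
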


Using the fact that
$$\# \left \{(i^{(1)},i^{(2)}):  \left(\frac{\pm \theta_{i}^{(1)}(p) \pm \theta_{i}^{(2)}(p)}{2 \pi} \right) = \alpha \right \}$$
$$ \leq \# \left\{(i^{(1)},i^{(2)}):  \left\{\frac{\pm \theta_{i}^{(1)}(p) \pm \theta_{i}^{(2)}(p)}{2 \pi} \right\} = \alpha \right \},$$
we have the following corollary, 
\begin{cor}
For any $\alpha \in [0,1],$
$$\#\left \{(i^{(1)},i^{(2)}):  \left(\frac{\pm \theta_{i}^{(1)}(p) \pm \theta_{i}^{(2)}(p)}{2 \pi} \right) = \alpha \right \}= \O \left(  (s(N,k))^2\left(\frac{\log \,p}{\log kN} \right)\right).$$
\end{cor}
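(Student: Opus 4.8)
The plan is to obtain this as an immediate consequence of Theorem~\ref{T12} together with the elementary set inclusion already displayed just above the statement. First I would record the observation that for a real number $t$ and any $\alpha\in[0,1)$, the equality $t=\alpha$ forces $\{t\}=\{\alpha\}=\alpha$; hence a pair $(i^{(1)},i^{(2)})$ with $\frac{\pm\theta_i^{(1)}(p)\pm\theta_i^{(2)}(p)}{2\pi}=\alpha$ also satisfies $\left\{\frac{\pm\theta_i^{(1)}(p)\pm\theta_i^{(2)}(p)}{2\pi}\right\}=\alpha$. Consequently
$$\#\left\{(i^{(1)},i^{(2)}):\ \frac{\pm\theta_i^{(1)}(p)\pm\theta_i^{(2)}(p)}{2\pi}=\alpha\right\}\ \leq\ \#\left\{(i^{(1)},i^{(2)}):\ \left\{\frac{\pm\theta_i^{(1)}(p)\pm\theta_i^{(2)}(p)}{2\pi}\right\}=\alpha\right\},$$
and applying Theorem~\ref{T12} to the right-hand side yields the bound $\O\!\left((s(N,k))^2\frac{\log p}{\log kN}\right)$ for every $\alpha\in[0,1)$.

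Second, I would dispose of the boundary value $\alpha=1$ separately, since there $\{\alpha\}\neq\alpha$ and the inclusion above is not literally applicable. As $\theta_i^{(j)}(p)\in[0,\pi]$, the quantity $\frac{\pm\theta_i^{(1)}(p)\pm\theta_i^{(2)}(p)}{2\pi}$ lies in $[-1,1]$, so it can equal $1$ only for the sign choice $+,+$ and only when $\theta_i^{(1)}(p)=\theta_i^{(2)}(p)=\pi$; for such pairs the fractional part is $0$, so this case is controlled by the $\alpha=0$ instance of Theorem~\ref{T12}. Alternatively one notes directly that $\#\{i:\theta_i(p)=\pi\}$ is $\O\!\left(s(N,k)\frac{\log p}{\log kN}\right)$ by the same mechanism underlying Theorem~\ref{T15}. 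Either way the $\alpha=1$ count is absorbed into the stated error term, so the bound holds for all $\alpha\in[0,1]$.

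Since Theorem~\ref{T12} is itself deduced from Theorem~\ref{pair-correlation} (which rests on Theorem~\ref{Main} applied to the Serre / Murty--Sinha effective equidistribution of the eigenangles $\theta_i(p)$), effectivity of the implied constant is inherited along that chain. I do not anticipate any genuine obstacle: the single point that needs a little care is the behaviour of the fractional-part operation at the endpoints of $[0,1]$, which the two paragraphs above handle completely.
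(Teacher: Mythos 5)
Your proof is correct and takes essentially the same route as the paper: the paper obtains this corollary from Theorem \ref{T12} via exactly the set inclusion you display (it states that inequality immediately before the corollary and concludes). Your separate handling of the endpoint $\alpha=1$, where $\{t\}\neq t$, is a small point of care that the paper glosses over, but it does not change the argument.
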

\begin{remark}
which recovers Theorem 1 in~\cite{MSr}, when $\alpha=0$.
\end{remark}

 \begin{remark}
In the above theorem if $\alpha=0$ and $N=1$ it gives evidence towards the famous Maeda conjecture and for $N \geq 1$, it gives evidence towards Tsaknias conjecture as mentioned in the introduction.
\end{remark}

Before proving Theorem \ref{pair-correlation} let us collect the following facts: \\
From~\cite{Serre} and~\cite{MS} we know that 
$$\left \{\frac{\pm \theta_{i}^{(j)}(p)}{2 \pi}, 1 \leq i \leq s(N,k) \right \}$$ is equidistributed in $[-\frac{1}{2}, \frac{1}{2}]$ with respect to the measure 
$$\mu_p=F(x)dx,$$
where 
\begin{equation}\label{E50}
F(x)=4(p+1) \frac{\sin ^{2} 2 \pi x}{\left(p^{\frac{1}{2}}+p^{-\frac{1}{2}}\right)^2- 4 \cos ^2 2 \pi x}.
\end{equation}
So using Theorem \ref{E11}, we can conclude that, the concerned family is equidistributed in $[0,1]$ with respect to the measure $\underbrace{F* F\dots* F}_\text{$r$ times} dx$.  \\
To proceed further, let us prove the following proposition: 
\begin{prop}\label{P10}
For any positive integer $m,$
$$\left |\prod_{j=1}^r \sum_{{i}\atop{1 \leq i \leq s(N,k) }}2 \cos m \theta_{i}^{(j)}(p)- C_m(s(N,k))^r \right|$$
$$\ll p^{\frac{3rm}{2}}m^r(\log \,p)^r 2^{r \nu(N)}+(\sqrt{N}d(N))^r,$$
where $\nu(N)$ is the number of distinct prime factor of $N$ and $d(N)$ is the divisor function.
The constant is absolutely and effectively computable.
\end{prop}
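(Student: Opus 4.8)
\textbf{Proof proposal for Proposition \ref{P10}.}

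The plan is to reduce the quantity in question to a sum of errors coming from a single prime, and to estimate each such error using the Eichler--Selberg trace formula together with known bounds for the coefficients that appear in it. First I would recall the relation $\frac{a_i(p)}{p^{(k-1)/2}} = 2\cos\theta_i(p)$ and the Chebyshev identity $\sum_{i} 2\cos m\theta_i^{(j)}(p)$ equals $\sum_i U_m$-type expressions in $\frac{a_i(p)}{p^{(k-1)/2}}$; more precisely, since $2\cos m\theta = \mathrm{tr}\left(\begin{smallmatrix}e^{i\theta}&0\\0&e^{-i\theta}\end{smallmatrix}\right)^m$ can be written via the Hecke relations as a fixed integer polynomial in $\frac{a_i(p^\ell)}{p^{\ell(k-1)/2}}$ for $\ell \le m$ (equivalently, $\sum_i 2\cos m\theta_i(p) = \frac{1}{p^{m(k-1)/2}}\operatorname{tr} T_{p^m}^{(\mathrm{new/old})}$ up to lower-order Hecke terms), the left-hand product becomes a product over $j$ of traces of Hecke operators $T_{p^m}$ (and products of smaller ones), each divided by the appropriate power of $p$. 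Because all $r$ sequences $A_n^{(j)}$ are literally the same multiset, the product is an $r$-th power, so it suffices to bound the single-prime discrepancy $\bigl|\sum_{i\le s(N,k)} 2\cos m\theta_i(p) - c_m\, s(N,k)\bigr|$ and then multiply out: writing $P_j = \sum_i 2\cos m\theta_i^{(j)}(p)$ and $Q_j = c_m^{(j)} s(N,k)$ with $|P_j - Q_j| \le E$ and $|Q_j| \le s(N,k)$, a telescoping estimate $|\prod P_j - \prod Q_j| \le \sum_{t=1}^r \bigl(\prod_{j<t}|P_j|\bigr) E \bigl(\prod_{j>t}|Q_j|\bigr)$ gives the stated shape once $E$ is known, using $C_m = \prod_j c_m^{(j)}$ from Theorem \ref{T1}.

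Next I would compute $c_m$, the $m$-th Weyl limit of $\{\pm\theta_i(p)/2\pi\}$ with respect to the Serre measure $F(x)\,dx$ from \eqref{E50}; this is just $\int_{-1/2}^{1/2} e(mx) F(x)\,dx = \hat F(-m)$, a rational function of $p$ (for $r=2$ the paper already records the explicit convolution), and crucially $|c_m| \le 1$ and $c_m = c_m(p)$ is independent of $N,k$. The single-prime error $E = \bigl|\sum_i 2\cos m\theta_i(p) - c_m\, s(N,k)\bigr|$ is then exactly the error term in the effective vertical Sato--Tate estimate of Murty--Sinha applied to the test function $2\cos m\theta$ (a trigonometric polynomial of degree $m$): the main term $c_m\, s(N,k)$ is the $T_1$-contribution (the dimension term in the trace formula), and the remaining terms — the elliptic/hyperbolic/parabolic contributions to $\operatorname{tr} T_{p^m}$ on $S(N,k)$ — are bounded, via Deligne's bound $|a(p^m)| \le d(p^m) p^{m(k-1)/2} = (m+1)p^{m(k-1)/2}$ on each Hecke eigenvalue appearing, by $O\bigl(p^{3m/2} m\, (\log p)\, 2^{\nu(N)} + \sqrt N\, d(N)\bigr)$, exactly as in \cite[\S]{MS}; the $2^{\nu(N)}$ counts the divisors of $N$ entering the newform-decomposition, $d(N)$ and $\sqrt N$ come from the parabolic and the identity-contribution corrections, and the $p^{3m/2}$ is the worst elliptic term (class-number times Deligne bound). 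Plugging $E \ll p^{3m/2} m (\log p) 2^{\nu(N)} + \sqrt N d(N)$, $|P_j| \le s(N,k) + E$, $|Q_j| \le s(N,k)$ into the telescoping bound and absorbing the mixed terms yields the claimed $\ll p^{3rm/2} m^r (\log p)^r 2^{r\nu(N)} + (\sqrt N\, d(N))^r$.

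The main obstacle is the careful bookkeeping in the trace formula: I must make sure that, after expanding $2\cos m\theta_i(p)$ in terms of $\frac{a_i(p^\ell)}{p^{\ell(k-1)/2}}$ and invoking the Eichler--Selberg trace formula for $\operatorname{tr} T_{p^\ell}$ on $S(N,k)$, the identity term contributes precisely $c_m\, s(N,k)$ (so that it cancels against the main term) and every other term is genuinely of size $O(p^{3m/2} m (\log p) 2^{\nu(N)} + \sqrt N d(N))$ uniformly in $k$ — the uniformity in $k$ is what makes this a vertical statement and is where one must lean on the Murty--Sinha analysis rather than redo it. A secondary technical point is handling the newform/oldform split (Atkin--Lehner theory) so that the $2^{\nu(N)}$ factor, rather than something larger, suffices; this is standard but must be stated. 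Once the single-prime estimate is in hand with the right dependence on $p$, $m$, $N$, the passage to the $r$-fold product is purely the elementary telescoping inequality above, and the effectivity of the implied constant is inherited from \cite{MS}.
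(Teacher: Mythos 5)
Your reduction and your ingredients are exactly those of the paper: the Murty--Sinha evaluation of the Weyl limits $c_m^{(j)}$ and the single-index error bound $E:=\bigl|\sum_{i}2\cos m\theta_i(p)-c_m\,s(N,k)\bigr|\ll p^{3m/2}m(\log p)2^{\nu(N)}+\sqrt{N}d(N)$ coming from the Eichler--Selberg trace formula, the Chebyshev relation expressing $\sum_i 2\cos m\theta_i(p)$ as $\mathrm{Tr}\,T'_{p^m}-\mathrm{Tr}\,T'_{p^{m-2}}$, and $C_m=\prod_j c_m^{(j)}$ from Theorem \ref{T1}. Where you diverge is the passage to the $r$-fold product: the paper collapses the product into the single quantity $(\mathrm{Tr}\,T'_{p^m}-\mathrm{Tr}\,T'_{p^{m-2}})^r$ and applies the elementary inequality $(a+b)^r\ll_r a^r+b^r$ to the main-term/error-term decomposition of that one trace, whereas you telescope $\prod_j P_j-\prod_j Q_j$.

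The genuine gap is your final sentence, ``absorbing the mixed terms yields the claimed bound.'' Writing $P=Q+\epsilon$ with $Q=c_m\,s(N,k)$ and $|\epsilon|\le E$, your telescoping (equivalently the binomial expansion of $(Q+\epsilon)^r-Q^r$) produces the cross term $rQ^{r-1}\epsilon$, of size up to $r|c_m|^{r-1}s(N,k)^{r-1}E$. In the range relevant to Theorem \ref{pair-correlation}, namely $m\le M\asymp \log(kN)/\log p$ with a small constant, one has $E\ll s(N,k)^{1/2+3c/2+\varepsilon}\ll s(N,k)$, so $s(N,k)^{r-1}E$ is far larger than $E^r$ and cannot be absorbed into $p^{3rm/2}m^r(\log p)^r2^{r\nu(N)}+(\sqrt{N}d(N))^r$; for a concrete check take $r=2$, $m=2$, where $|c_2|=|p^{-1}-1|\asymp 1$ and the cross term is $\asymp s(N,k)E\gg E^2$. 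What your telescoping honestly proves is a bound of the shape $\ll r\,s(N,k)^{r-1}\bigl(p^{3m/2}m(\log p)2^{\nu(N)}+\sqrt{N}d(N)\bigr)$ plus higher powers of $E$ --- a statement of a different shape from Proposition \ref{P10}, though still sufficient to run the proof of Theorem \ref{pair-correlation} after dividing by $s(N,k)^r$ and choosing $M$ as in the paper. You should either prove and use that weaker bound, or justify why the cross terms can be discarded; as written the absorption step fails. (The paper's own write-up is terse at exactly the same point, but that does not repair the step in yours.)
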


\begin{proof}
Estimating each terms of Eichler-Selberg trace formula Murty and Sinha (see~\cite{MS} Theorem 18 and (11))  proved the following: 
For any positive integer $m$, let $c_{m}^{(j)}$ be the Weyl limits of the family $$\{\pm \theta_{i}^{(j)}(p), 1 \leq i \leq s(N,k)\}.$$
For $1 \leq j \leq r$, the Weyl limits $c_{m}^{(j)}$ are given by 
 \begin{equation}\label{E12}
c_{m}^{(j)} =  
\left \{ 
\begin{array}{l l}
  1 &  \text{if } m=0 \\
  \left(\frac{1}{p^{\frac{m}{2}}}-\frac{1}{p^{\frac{m-2}{2}}}\right) &  \text{if $m$ is even }  \\
  0 &  \text{otherwise.}
\end{array}      
\right.
\end{equation}
    Moreover, for $m \geq 1$ and $1 \leq j \leq r,$
    $$\left|\sum_{i=1}^{s(N,k)}  (2 \cos m \theta_{i}^{(j)}(p) -c_{m}^{(j)} (s(N,k))\right| 
    \ll p^{\frac{3m}{2}}2^{\nu(N)} \log \,p^m+ d(N)\sqrt{N}.$$
Using the fact
\begin{equation}\label{EChev}
2 \cos m \theta_{i}^{(j)}(p)= X_m(2 \cos \theta_{i}^{(j)}(p))- X_{m-2}(2 \cos \theta_{i}^{(j)}(p)),  \ \ m \geq 2,
\end{equation}
where
$$X_m(2 \cos \theta)=\frac{\sin (m+1) \theta}{\sin \theta}$$
we have 
$$\prod_{j=1}^r \sum_{{i}\atop{1 \leq i \leq s(N,k) }}2 \cos m \theta_{i}^{(j)}(p)=(Tr T^{'}_{p^m}-Tr T^{'}_{p^{m-2}})^r.$$ 

Now, using the estimates of Eichler-Selberg trace formula that have been done in~\cite[page no. 696]{MS} and the well known inequality \\
\begin{equation}\label{E17}
(a-b)^r \leq r(a^r+b^r),
\end{equation} 
we have 
$$(Tr T^{'}_{p^m}-Tr T^{'}_{p^{m-2}})^r \ll r \left(\left(\frac{k-1}{12}\right) \psi(N)\left(\frac{1}{p^{\frac{m}{2}}}-\frac{1}{p^{\frac{m-1}{2}}}\right)\right)^r $$
$$+ \left(p^{\frac{3m}{2}}m(\log \,p) 2^{ \nu(N)}+\sqrt{N}d(N)\right)^r.$$
Using (\ref{E17}) and the fact that $s(N,k)=\O(kN),$ we get
$$\left |\prod_{j=1}^r \sum_{{i}\atop{1 \leq i \leq s(N,k) }}2 \cos m \theta_{i}^{(j)}(p)- (c_m(s(N,k)))^r \right|$$
$$\ll p^{\frac{3rm}{2}}m^r(\log \,p)^r 2^{r \nu(N)}+(\sqrt{N}d(N))^r.$$
\end{proof}
\noindent \textbf{Proof of Theorem \ref{pair-correlation}}.  \\ 
Using Theorem \ref{Main}, the concerned quantity is
$$\ll \frac{(s(N,k))^r}{M+1}
+p^{\frac{3rM}{2}}M^r(\log \,p)^r 2^{r \nu(N)}+(\sqrt{N}d(N))^r.$$
 Now we want to choose $M$ such that 
 $$\frac{(s(N,k))^r}{M+1} \sim p^{\frac{3rM}{2}}.$$
 And that can be achieved by Choosing 
$M=c \frac{\log \,kN}{\log \,p}$ for a sufficiently small constant $c$.
Putting the above value of $M$, we have the required result.  \\  \\
\noindent \textbf{(ii)}
\textbf{Distribution of gaps between the Satake parameters of $GL_2$ with prescribes local representations.} \\ \\
In this section, we follow notations and presentation from~\cite{LLW}.
Here we apply our theorems to study the distribution of gaps between eigenvalues of normalised Hecke operator $T^{'}_{p}$ acting on Hilbert modular forms and further to some $GL_2$ automorphic representations, whose local components  at a finite set of finite places are specified. 
Let $F$ be a totally real number field with degree $d \geq 2$.  
Let $\sigma_1, \sigma_2,...,\sigma_r$ be the  embeddings of $F$ into $\R$ with  valuations $\infty_1, \infty_2,..., \infty_r$ respectively. 
Let $\mathcal{O}$ be the ring of integers of $F.$ For any non-archimedean valuation $v,$ let $\wp_{v}$ denote the corresponding prime ideal and $q_v=N(\wp_v)$. 
Let $\mathbb A=\mathbb A_{F}$ be the set of adeles and $\mathbb A_{fin}$ be the set of finite adeles.  For any $\alpha=(\alpha_v)_{v< \infty} \in \mathbb A_{fin},$ the norm $N(\alpha)$ is defined as 
$$N(\alpha):=\prod_{v < \infty}N(\wp_v)^{ord_v \ \alpha_v}.$$

For any integral ideal $\mathfrak{a}$ define the ideal norm of  $\mathfrak{a}$ as 
$$N_{\mathfrak{a}}:=|\mathcal{O} / \mathfrak{a}|.$$
For $\alpha \in \mathcal{O},$ define the absolute norm of $\alpha $ as 
$$N(\alpha):= N ((\alpha)).$$

  Let $G=GL_2$ and ${Z}$ be its center.  For our convenience, let us write $\bar{G}={Z} \setminus {G}$ and $\bar{G}({\mathbb{A}})= {Z}(\mathbb{A}) \setminus G(\mathbb{A}).$  Let $L^2 \left( \bar{G}(F) \setminus \bar{G}(\mathbb{A})\right)$ be the space of square integrable function on $\bar{G}(F) \setminus  \bar{G}(\mathbb{A}).$
Let $L_0^2$ be the subspace of cuspidal functions.  Let $\textbf{n}$ and $\mathfrak{N}$ be two integral ideal of $\mathcal{O}.$  Define
$$M(\textbf{n}_v, \mathfrak{N}_v):= 
\left \{ \gamma= \begin{pmatrix} a & b \\
c & d 
\end{pmatrix} \in M_2(\mathcal{O}_v): c \in \mathcal{O}_v, (det \ \gamma) \mathcal{O}_v=\textbf{n}_v \right \}.$$ 
 For any ring $R,$ let 
$$M_2(R):=\left\{  \begin{pmatrix} a & b \\
c & d 
\end{pmatrix}: a,b,c,d \in R \right\}.$$
Define 
$$K_o (\mathfrak{N}_v):= \left\{ \begin{pmatrix} a & b \\
c & d \end{pmatrix} \in K_v : c \in \mathfrak{N}_v  \right \}, \ \  K_o (\mathfrak{N}):=\prod_{v < \infty} K_o (\mathfrak{N}_v). $$
where 
$K_v=GL_2(\mathcal{O}_v).$
Let $\underline {k}=(k_1,k_2,...,k_r)$ be an $r$ tuples of even integers with $k_i \geq 4, \  i=1,2,...,r.$  
For any integral ideal $\mathfrak{N}$ of $\mathcal{O}$, consider $\Pi_{\underline {k}}(\mathfrak{N})$ be the set of cuspidal automorphic representations $\pi$
in $L_{0}^{2},$ such that  
$$\pi_{fin}=\hat{\otimes}_{v < \infty} \pi_v \ \ \text{contains a non zero  $K_0(\mathfrak{N})$ fixed vector.}$$
and
$$\pi_{\infty_{i}}= \pi _{k_i},  \text{for $i=1,2,...,r$},$$
where  $\pi_{k_i}$ be the discrete representation of $GL_2(\R)$ of weight $k_i$ (even positive integer) with trivial central character.
From~\cite{BJ}, we know that, the set $\Pi_{\underline {k}}(\mathfrak{N})$ is finite. 
Let $B$ be the set of upper triangular matrices of $GL_2$,
$$ \chi \left( 
\begin{pmatrix}
a & b  \\
  & d
\end{pmatrix}
  \right)=\left|\frac{a}{d}\right|_{v}^{\frac{1}{2}} \chi_{1}(a) \chi_{2}(d),$$
  where  $\chi_{1}, \chi_{2}$ are unramified characters of $F_v$.  
 For any finite unramified place $v$ of $\pi,$
$$\pi_{v}=Ind_{B{(F_v)}}^{G(F_{v})} \chi,$$
where $Ind_{B{(F_v)}}^{G(F_{v})}$ is the induce representation of ${B{(F_v)}}$ on ${G(F_{v})}.$
Define
$$\lambda_{v}(\pi)=\alpha_{1v}+\alpha_{2v},$$
where $\alpha_{i,v}=\chi_{i}(\varpi),$ $\varpi$ is any uniformizer of $F_v.$
In the literature the above mentioned $\alpha_{iv}, i=1,2$ are called Satake parameters. Using Ramanujan conjecture (see~\cite{D} and~\cite{L}), we know that 
$$\lambda_{v}(\pi) \in [-2,2].$$ 
Let $S=\{w_1,w_2,...,w_l\}$ be a set of non-archemidean valuations with the corresponding prime ideal say ${\mathfrak{q}}_i$ attached to $w_i$.
Let $\rho_{w_i}$ be a super cuspidal representation of ${Z(F_{w_i})} \setminus {GL_2(F_{w_i})},$
where $Z$ is the center of $GL_2(F_{w_i}).$ 
Let ${{\mathfrak{q}}_i}^{c_i}$ be the conductor of $\rho_{w_i}$ and $\mathfrak{M}=\prod_{i=1}^{l}\mathfrak{q}_i^{c_i}$.
Define 
$$\Pi_{\underline{k}}(\mathfrak{N}, \underline {\rho}):= 
\{\pi \in \Pi_{\underline{k}}(\mathfrak{N}):\pi_{w_i} \cong \rho_{w_i} \text{for $i=1,2,...,l$}\},$$
where $\underline{\rho}=(\rho_{w_1}, \rho_{w_2},...,\rho_{w_l}).$
Consider $\# \Pi_{\underline{k}}(\mathfrak{N}, \underline {\rho})=n$. \\
For $1 \leq i \leq n$ and $\pi_i \in \Pi_{\underline{k}}(\mathfrak{N}),$ write 
$$\lambda_{v}(\pi_i)=2 \cos \theta_{v}(\pi_i), \ \ \theta_{v}(\pi_i) \in [0,\pi].$$
From the work of Lau-Li-Wang~\cite{LLW} and Li~\cite{Li} we know that the family $\{\lambda_{v}(\pi_i)\}$
is equidistributed in the interval $[-2,2]$ with respect to the measure
$$d \mu_v(x)=\frac{N(\wp)+1}{\left(N(\wp)^{\frac{1}{2}}+N(\wp)^{-\frac{1}{2}}\right)^2-x^2} \ d\mu_{\infty}x,$$
where 
\begin{eqnarray*}
d \mu_{\infty}(x)=
\left\{
\begin{array}{l l}
\frac{\sqrt{1-\frac{x^2}{4}}}{\pi} & \text{for $x \in [-2,2]$}  \\
0 & \text{otherwise.}
\end{array}
\right.
\end{eqnarray*}
\begin{thm}\label{T13}
Let 
$$\hat{\pi}=(\pi_1,\pi_2,...,\pi_r)$$
 and
$${\widehat{\Pi}}_{\underline{k}}(\mathfrak{N},\underline {\rho})=\Pi_{\underline{k}} (\mathfrak{N},\underline{\rho}) \times \Pi_{\underline{k}}(\mathfrak{N},\underline {\rho}) \times \dots \times \Pi_{\underline{k}}(\mathfrak{N},\underline {\rho}).$$
There exists a small constant $\delta>0,$ such that for all sufficiently large positive integers $n,$
$$\frac{1}{(\# \Pi_{\underline{k}}(\mathfrak{N} \mathfrak{M}), \underline{\rho})^r}
 \# \left\{ \hat{\pi}_v \in {\widehat{\Pi}}_{\underline{k}}(\mathfrak{N},\underline {\rho}):  \left\{\frac{\pm \theta_{v}(\pi_1) \pm \theta_{v}(\pi_2) \pm \dots\pm \theta_{v}(\pi_r)}{2 \pi}\right\} \in [\alpha,\beta]\right\}$$
$$=\int_{\alpha}^{\beta} \nu_{v} + \O\left(\frac{\log  \  N \wp}{\log   \ C_{\underline {k}}N(\mathfrak{N})}\right)$$
holds uniformly for integer $r \geq 1$ and a prime ideal   $\wp$ with valuation $v$, which is not in $S$, satisfying 
$$ r \log N(\wp) \leq \delta \log (C_{\underline k} N(\mathfrak{N})),$$  
and uniformly for any $[\alpha, \beta] \subseteq [0,1],$ and $1 \leq r \leq n.$
where, 
$$C_{\underline {k}}=\prod_{j=1}^r\frac{k_i-1}{4 \pi},$$
$$\nu_v=\underbrace{F_v(x)*F_v(x)* \dots *F_v}_\text{$r$ times}(x)dx, $$ 
and 
$$F_v=4(N(\wp)+1)\frac{sin^2 2 \pi x}{\left(N(\wp)^\frac{1}{2}+N(\wp)^{-\frac{1}{2}}\right)^2-4 x^2}.$$
Here the implied constant is effectively computable.
\end{thm}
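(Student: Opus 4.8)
The plan is to follow the proof of Theorem \ref{pair-correlation}, replacing the Eichler-Selberg trace formula estimates of Murty-Sinha~\cite{MS} by the effective Sato-Tate estimates of Lau-Li-Wang~\cite{LLW} and Li~\cite{Li} attached to the family $\Pi_{\underline{k}}(\mathfrak{N},\underline{\rho})$ of automorphic representations carrying the prescribed supercuspidal local components $\rho_{w_i}$ at the places of $S$. Set $n=\#\Pi_{\underline{k}}(\mathfrak{N},\underline{\rho})$, $q_v=N(\wp)$, and for each of the $r$ factors let $A_n^{(j)}$ be the symmetric multiset $\{\pm\theta_v(\pi_i)/2\pi:1\le i\le n\}\subseteq[-\tfrac12,\tfrac12]$, where $\lambda_v(\pi_i)=2\cos\theta_v(\pi_i)$. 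First I would record that, by Lau-Li-Wang and Li, $\{\lambda_v(\pi_i)\}$ is equidistributed in $[-2,2]$ with respect to $d\mu_v$, so pushing forward through $x\mapsto 2\cos 2\pi x$ makes each $\{A_n^{(j)}\}$ equidistributed in $[-\tfrac12,\tfrac12]$ against $F_v(x)\,dx$, with $F_v$ as in the statement. As in Proposition \ref{P10} (whose computation uses only the Hecke recursion and the shape of the trace formula), the Weyl limits $c_m^{(j)}$ are the analogues of $(\ref{E12})$ with $p$ replaced by $q_v$: they vanish for odd $m$, equal $1$ for $m=0$, and equal $q_v^{-m/2}-q_v^{-(m-2)/2}$ for even $m\ge2$. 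In particular $\sum_m|c_m^{(j)}|^2<\infty$, so Theorems \ref{T1} and \ref{Measure} identify the limit measure of the gap family as $\nu_v=F_v*\cdots*F_v\,dx$.

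The core step is to establish the exact analogue of Proposition \ref{P10}: for every integer $m\ge1$,
$$\Bigl|\,\prod_{j=1}^r\sum_{i=1}^n 2\cos m\theta_v(\pi_i)\;-\;C_m\,n^r\,\Bigr|\ \ll\ q_v^{3rm/2}\,m^r(\log q_v)^r\,2^{r\nu(\mathfrak{N}\mathfrak{M})}\;+\;\bigl(N(\mathfrak{N}\mathfrak{M})^{1/2}d(\mathfrak{N}\mathfrak{M})\bigr)^r,$$
with an absolute, effectively computable implied constant. Since the $r$ factors coincide, the Chebyshev identity $(\ref{EChev})$ rewrites the product as $\bigl(\operatorname{Tr}T'_{\wp^m}-\operatorname{Tr}T'_{\wp^{m-2}}\bigr)^r$, where $T'_{\wp^m}$ acts on the finite-dimensional space underlying $\Pi_{\underline{k}}(\mathfrak{N},\underline{\rho})$ --- Hilbert cusp forms of weight $\underline{k}$ and level $\mathfrak{N}$, cut down to the $\rho_{w_i}$-isotypic part at the places $w_i\in S$, so that the conductor $\mathfrak{M}=\prod_i\mathfrak{q}_i^{c_i}$ plays the role of an additional level. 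I would then insert the Lau-Li-Wang estimate for each individual trace sum --- whose main term has size $\asymp C_{\underline{k}}N(\mathfrak{N}\mathfrak{M})\asymp n$, so that the main term of the $r$-th power cancels $C_m n^r=(c_m^{(j)})^r n^r$ --- and expand the difference with the elementary inequality $(a-b)^r\le r(a^r+b^r)$, exactly as in Proposition \ref{P10}.

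Next I would feed this into Theorem \ref{Main}, applied to $A_{\underline n}=A_n^{(1)}\times\cdots\times A_n^{(r)}$ and $I=[\alpha,\beta]$. Using $\|\nu_v\|=1$ and dividing by $n^r$, the error in the claimed identity is bounded by
$$\ll\ \frac{1}{M+1}\;+\;\frac{1}{n^r}\sum_{|m|\le M}\Bigl(\tfrac{1}{M+1}+\tfrac{1}{\pi|m|}\Bigr)\Bigl(q_v^{3rm/2}m^r(\log q_v)^r2^{r\nu(\mathfrak{N}\mathfrak{M})}+\bigl(N(\mathfrak{N}\mathfrak{M})^{1/2}d(\mathfrak{N}\mathfrak{M})\bigr)^r\Bigr).$$
The sum is governed by its $m=M$ term; taking $M\asymp\frac{\log(C_{\underline{k}}N(\mathfrak{N}))}{\log q_v}$ with a small enough implied constant balances $\frac{1}{M+1}$ against it (using $n\asymp C_{\underline{k}}N(\mathfrak{N}\mathfrak{M})$), and the hypothesis $r\log N(\wp)\le\delta\log(C_{\underline{k}}N(\mathfrak{N}))$ is exactly what makes $M\ge1$ and forces the $r$-dependent factors ($m^r$, $(\log q_v)^r$, $2^{r\nu(\mathfrak{N}\mathfrak{M})}$, $d(\mathfrak{N}\mathfrak{M})^r$) to be dominated, leaving $\O\!\left(\frac{\log N\wp}{\log C_{\underline{k}}N(\mathfrak{N})}\right)$ uniformly for $r$ in the stated range and uniformly in $[\alpha,\beta]\subseteq[0,1]$.

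The hard part is the displayed estimate in the second paragraph: one must verify that the Lau-Li-Wang trace-formula bounds survive, with explicit constants, when the representation space is cut down to the supercuspidal types $\rho_{w_i}$ at the places of $S$ --- equivalently, that passing from level $\mathfrak{N}$ to $\mathfrak{N}\mathfrak{M}$ together with the supercuspidal constraint costs no more than an ordinary level-raising --- and then to propagate the dependence on $r$ cleanly through $(a-b)^r\le r(a^r+b^r)$ so that the final error is uniform throughout the range $r\log N(\wp)\le\delta\log(C_{\underline{k}}N(\mathfrak{N}))$. Once this is in hand, the remaining steps are a routine transcription of the proof of Theorem \ref{pair-correlation}.
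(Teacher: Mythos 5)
Your skeleton matches the paper's proof exactly: establish via Lau--Li--Wang that each symmetric family $\{\pm\theta_v(\pi_i)/2\pi\}$ is equidistributed in $[-\tfrac12,\tfrac12]$ against $F_v(x)\,dx$, prove a product-of-exponential-sums estimate analogous to Proposition \ref{P10}, feed it into Theorem \ref{Main}, and choose $M=c\,\log(C_{\underline{k}}N(\mathfrak{N}))/\log N(\wp)$ with $c$ small so that the hypothesis $r\log N(\wp)\le\delta\log(C_{\underline{k}}N(\mathfrak{N}))$ keeps the secondary terms dominated and leaves $\O\bigl(\log N\wp/\log(C_{\underline{k}}N(\mathfrak{N}))\bigr)$. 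The one place you diverge is the key estimate. You propose to prove it in the Eichler--Selberg shape of Proposition \ref{P10}, namely $\ll q_v^{3rm/2}m^r(\log q_v)^r2^{r\nu(\mathfrak{N}\mathfrak{M})}+\bigl(N(\mathfrak{N}\mathfrak{M})^{1/2}d(\mathfrak{N}\mathfrak{M})\bigr)^r$, and you flag as ``the hard part'' the verification that the trace-formula bounds survive the cut-down to the supercuspidal types at the places of $S$. The paper does not do this: its Proposition \ref{P11} is obtained by quoting \cite[Proposition 8.1]{LLW} directly, which is already stated for the constrained family $\Pi_{\underline{k}}(\mathfrak{N},\underline{\rho})$ (level $\mathfrak{N}\mathfrak{M}$) and gives the individual bound $\ll d_{\underline{\rho}}N(\mathfrak{N})^{1/2+\epsilon}N(\wp^m)^{3/2}N(m)^2$ via Arthur's trace formula; raising this to the $r$-th power with $(a-b)^r\le r(a^r+b^r)$ yields a bound of the shape $\bigl(d_{\underline{\rho}}N(\mathfrak{N})^{1/2+\epsilon}N(\wp^m)^{3/2}\bigr)^r N(m)^{2r}$, which has a different form from yours but balances against $1/(M+1)$ in exactly the same way. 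So the ``hard part'' you identify is already in the literature and need not be re-derived; your route would instead require a genuinely new Hilbert-modular Eichler--Selberg computation with prescribed supercuspidal components, which the paper avoids. (Two minor points: the Weyl limits in the paper's Proposition \ref{P11} carry an extra factor $\tfrac12$ relative to \eqref{E12} because the sums there are over $\cos 2\pi m\theta_v(\pi_i)$ rather than $2\cos m\theta_v(\pi_i)$; and the identification of the limit measure as the $r$-fold convolution $\nu_v$ goes through Theorems \ref{T1} and \ref{Measure} exactly as you say, since $\sum_m|c_m^{(j)}|^2<\infty$.)
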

Taking $S =\phi,$ we have the following corollary:
\begin{cor}\label{C3}
Let $\wp$ be a prime ideal in $\mathcal{O}.$  Suppose $\mathfrak{N}$ is an integral ideal with $(\mathfrak{N}, \wp)=(1).$  
Then,
for any $[\alpha, \beta] \subseteq [0,1],$
$$\frac{1}{(\# \Pi_{\underline{k}}(\mathfrak{N} ))^r} \# \left\{\hat{\pi}_v \in {\widehat{\Pi}}_{\underline{k}}(\mathfrak{N},\underline {\rho}): \left\{\frac{\pm \theta_{v}(\pi_1) \pm \theta_{v}(\pi_2) \pm \dots\pm \theta_{v}(\pi_r)}{2 \pi} \right\} \in [\alpha,\beta]\right\}$$
$$=\int_{\alpha}^{\beta} \nu_{\wp} + \O\left(\frac{\log \,  N \wp}{\log \, C_{\underline {k}}N(\mathfrak{N})}\right).$$
\end{cor}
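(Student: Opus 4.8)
The plan is to obtain Corollary \ref{C3} as the special case $S=\phi$ of Theorem \ref{T13}. First I would record what this specialization does to the notation: when $S$ is empty the index set $\{w_1,\dots,w_l\}$ is empty, so $l=0$, the tuple $\underline{\rho}=(\rho_{w_1},\dots,\rho_{w_l})$ is the empty tuple, and the conductor ideal $\mathfrak{M}=\prod_{i=1}^{l}\mathfrak{q}_i^{c_i}$ is the empty product, i.e. $\mathfrak{M}=(1)$. Consequently the supercuspidal matching conditions $\pi_{w_i}\cong\rho_{w_i}$ that cut out $\Pi_{\underline{k}}(\mathfrak{N},\underline{\rho})$ inside $\Pi_{\underline{k}}(\mathfrak{N})$ become vacuous, so $\Pi_{\underline{k}}(\mathfrak{N},\underline{\rho})=\Pi_{\underline{k}}(\mathfrak{N})$ and $\mathfrak{N}\mathfrak{M}=\mathfrak{N}$. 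Thus the left-hand side of Theorem \ref{T13} becomes exactly the left-hand side of the corollary.

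Next I would verify that the hypotheses of Theorem \ref{T13} hold for the prime ideal $\wp$ of the corollary. Theorem \ref{T13} asks for a prime ideal $\wp$, with valuation $v$, lying outside $S$; since $S=\phi$ this condition is automatic, and the only genuine requirement left is that $v$ be an unramified place for every $\pi\in\Pi_{\underline{k}}(\mathfrak{N})$, so that the local component $\pi_v$ is an unramified principal series and the Satake parameters $\alpha_{1v},\alpha_{2v}$, hence $\theta_v(\pi)$, are well defined. This is exactly what the coprimality hypothesis $(\mathfrak{N},\wp)=(1)$ guarantees. The growth restriction $r\log N(\wp)\le\delta\log(C_{\underline{k}}N(\mathfrak{N}))$ from Theorem \ref{T13} is inherited; it is harmless for the corollary, where $r$ is fixed and $N(\mathfrak{N})\to\infty$.

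Then I would simply invoke Theorem \ref{T13} with $\underline{\rho}$ empty. The density $F_v$, the normalizing quantity $C_{\underline{k}}$, and the convolution measure $\nu_v$ are defined with no reference to $S$, so they are unchanged; setting $\nu_\wp:=\nu_v$ reproduces the measure displayed in the corollary. The counting identity and the error term $\O\bigl(\log N(\wp)/\log(C_{\underline{k}}N(\mathfrak{N}))\bigr)$ carry over verbatim, with effectively computable implied constant since the one in Theorem \ref{T13} is effectively computable.

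Because the corollary is a pure specialization, I do not expect a real obstacle. The single point that deserves a sentence of care is that the effective ingredients behind Theorem \ref{T13}---the Lau--Li--Wang and Li equidistribution of $\{\lambda_v(\pi_i)\}$ in $[-2,2]$ together with the Eichler--Selberg type estimates for the relevant spaces of Hilbert modular forms---are already available in the unconstrained setting $S=\phi$ (this is precisely the classical vertical Sato--Tate statement for Hilbert modular forms), so no uniformity is lost when the supercuspidal conditions are dropped. Alternatively, one could bypass Theorem \ref{T13} and prove the corollary directly by feeding that equidistribution and a trace-formula bound for the exponential sums $\sum_i e\bigl(m\,\theta_v(\pi_i)/2\pi\bigr)$---an analogue of Proposition \ref{P10}---into Theorem \ref{Main}; routing through Theorem \ref{T13} is the shorter route.
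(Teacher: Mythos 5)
Your proposal matches the paper exactly: the paper derives Corollary \ref{C3} simply by taking $S=\phi$ in Theorem \ref{T13}, which is precisely your route, and your additional remarks (that $\mathfrak{M}=(1)$, that the supercuspidal conditions become vacuous so $\Pi_{\underline{k}}(\mathfrak{N},\underline{\rho})=\Pi_{\underline{k}}(\mathfrak{N})$, and that $(\mathfrak{N},\wp)=(1)$ supplies the needed unramifiedness) correctly fill in the details the paper leaves implicit. No gap.
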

Before proving Theorem \ref{T13} let us collect the following facts:
The following theorem can be seen as a special case of~\cite[Theorem 1.1]{LLW}: \\

Let $\wp$ be a prime ideal which is not in $S$ with valuation $v.$  For any $[\alpha, \beta] \subseteq [-\frac{1}{2},\frac{1}{2}],$ and $1 \leq r \leq n,$
$$\frac{1}{(\# \Pi_{\underline{k}}(\mathfrak{N} \mathfrak{M}))^r} \# \left\{1 \leq i \leq n: { \frac{\pm \theta_{v}(\pi_i)}{2 \pi}}\in [\alpha,\beta]\right\}$$
$$=\int_{\alpha}^{\beta} \nu_{\wp} + \O\left(\frac{\log \, N \wp}{\log \, C_{\underline {k}}N(\mathfrak{N})}\right),$$
where, $C_{\underline {k}}=\prod_{j=1}^r\frac{k_i-1}{4 \pi}.$  
In particular, $\left \{\frac{\pm \theta_{v}(\pi_i)}{2 \pi}\right \}$ is equidistributed in $[-\frac{1}{2}, \frac{1}{2}]$ with respect to the $\nu_{\wp}.$
To proceed further, let us prove the following proposition:
\begin{prop}\label{P11}
$$\left|\prod_{j=1}^r\sum_{\pi_i \in \Pi_{\underline{k}}(\mathfrak{N} \mathfrak{M})}  \cos 2 \pi m \theta_{v}(\pi_i)-(\# \varpi_{\underline{k}(\mathfrak{N} \mathfrak{M})})^r C_m \right|$$
$$\ll(d_{\underline \rho}N(\mathfrak{N})^{\frac{1}{2}+\epsilon}N(n)^{\frac{3}{2}})^r(N(m))^{2r},$$
\end{prop}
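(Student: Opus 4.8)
The plan is to reproduce, in this adelic setting, the argument used for Proposition~\ref{P10}, with the Eichler--Selberg trace formula replaced by the $GL_2$ trace formula over $F$ underlying the equidistribution results of Lau--Li--Wang~\cite{LLW} and Li~\cite{Li}, attached to the finite-dimensional space cut out by the conditions defining $\Pi_{\underline{k}}(\mathfrak{N}\mathfrak{M})$. First I would record the effective one-variable estimate implicit in~\cite{LLW},~\cite{Li}: estimating the geometric terms of that trace formula one by one shows that the $m$-th Weyl limit $c_m = c_m^{(j)}$ of the single family $\{\pm\theta_v(\pi_i)/2\pi\}$ is the archimedean main term --- so $c_m = 1$ for $m = 0$, and for $m \ge 1$ it equals the $\wp$-local factor $N(\wp)^{-m/2} - N(\wp)^{-(m-2)/2}$ when $m$ is even and $0$ otherwise, exactly as in~(\ref{E12}) --- together with the individual bound
$$\left| \sum_{\pi_i \in \Pi_{\underline{k}}(\mathfrak{N}\mathfrak{M})} \cos 2\pi m\, \theta_v(\pi_i) - c_m\, \#\Pi_{\underline{k}}(\mathfrak{N}\mathfrak{M}) \right| \ll d_{\underline{\rho}}\, N(\mathfrak{N})^{\frac{1}{2}+\epsilon}\, N(\wp)^{\frac{3m}{2}}\, N(m)^{2},$$
where $d_{\underline{\rho}}$ collects the conductors and formal degrees of the $\rho_{w_i}$ (entering through the auxiliary level $\mathfrak{M} = \prod_i \mathfrak{q}_i^{c_i}$), the factor $N(\mathfrak{N})^{1/2+\epsilon}$ comes from the geometric contributions at the places dividing $\mathfrak{N}$, and $N(\wp)^{3m/2}N(m)^{2}$ from the elliptic and hyperbolic terms carrying the Hecke operator at $\wp^m$.

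Next I would pass from trigonometric values to Hecke data by the Chebyshev identity~(\ref{EChev}): writing $2\cos m\,\theta_v(\pi_i) = X_m(\lambda_v(\pi_i)) - X_{m-2}(\lambda_v(\pi_i))$ gives
$$\sum_{\pi_i \in \Pi_{\underline{k}}(\mathfrak{N}\mathfrak{M})} 2\cos m\, \theta_v(\pi_i) = \operatorname{Tr} T'_{\wp^m} - \operatorname{Tr} T'_{\wp^{m-2}}$$
on the relevant space; this is precisely the quantity to which the trace-formula estimates of~\cite{LLW} apply, and it is at this stage that the explicit shape of the displayed bound is read off term by term.

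Finally, since the $r$ inner sums in the statement are literally the same sum $S := \sum_{\pi_i} \cos 2\pi m\, \theta_v(\pi_i)$, I would write $S = c_m\, \#\Pi_{\underline{k}}(\mathfrak{N}\mathfrak{M}) + E$ with $|E|$ bounded as above, and use
$$\left| S^r - \bigl(c_m\, \#\Pi_{\underline{k}}(\mathfrak{N}\mathfrak{M})\bigr)^r \right| \le r\, \max\bigl( |S|,\ |c_m|\, \#\Pi_{\underline{k}}(\mathfrak{N}\mathfrak{M}) \bigr)^{r-1} |E|,$$
together with $|c_m| \le 1$, the elementary inequality~(\ref{E17}), and the crude count $\#\Pi_{\underline{k}}(\mathfrak{N}\mathfrak{M}) \ll C_{\underline{k}}\, N(\mathfrak{N}\mathfrak{M})$; collecting the resulting powers of $d_{\underline{\rho}}$, $N(\mathfrak{N})$, $N(\wp)$ and $N(m)$ then yields the stated estimate.

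The main obstacle is the $r = 1$ case. Making the dependence on $m$, $N(\wp)$, $N(\mathfrak{N})$ and the supercuspidal data $\underline{\rho}$ fully explicit requires bounding every orbital integral on the geometric side --- in particular those at the places dividing $\mathfrak{M}$, governed by the conductor exponents $c_i$ and hence by $d_{\underline{\rho}}$, and the elliptic/hyperbolic terms attached to $T'_{\wp^m}$, which produce the $N(\wp)^{3m/2}$ and $N(m)^2$ factors. This is essentially the effective estimate established in~\cite{LLW}, so in the write-up I would extract the required inequality from there rather than reprove it from scratch.
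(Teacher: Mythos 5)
Your proposal follows essentially the same route as the paper: the paper likewise takes the $r=1$ estimate as a black box from Lau--Li--Wang (their Proposition~8.1, giving exactly the Weyl limits and the individual bound $\ll d_{\underline\rho}N(\mathfrak{N})^{1/2+\epsilon}N(\mathfrak{n})^{3/2}N(m)^2$ you record), and then passes to the $r$-fold product ``proceeding very similar way as proof of Proposition~\ref{P10}'', i.e.\ by the same Chebyshev/trace identification and elementary power inequalities you spell out. Your telescoping bound $|S^r-A^r|\le r\max(|S|,|A|)^{r-1}|E|$ is if anything a cleaner way to carry out that last step than the paper's appeal to~(\ref{E17}), but it is the same argument in substance.
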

\begin{proof}
The following result can be deduced from~\cite[Proposition 8.1]{LLW}: \\
  For $1 \leq j \leq r,$
\begin{equation}\label{E15}
|\sum_{\pi_i \in \Pi_{\underline{k}}(\mathfrak{N} \mathfrak{M})}  \cos 2 \pi m \theta_{v}(\pi_i)-\# \varpi_{\underline{k}(\mathfrak{N} \mathfrak{M}))} c_{ i_m}|
\ll d_{\underline \rho}N(\mathfrak{N})^{\frac{1}{2}+\epsilon}N(n)^{\frac{3}{2}}N(m)^2,
\end{equation}
where
\begin{equation*}
c_{m}^{(j)}=
\left\{
\begin{array}{l l}
\frac{1}{2}\left(\frac{1}{N(\wp)^{\frac{|m|}{2}}}-\frac{1}{N(\wp)^{\frac{|m|-2}{2}}}\right) & \text{if $m$ is even}, \\
0 & \text{otherwise}.
\end{array}
\right.
\end{equation*}
Lau, Li and Wang proved the above result using Arthur's trace formula on $GL_2(F)$, where $F$ is a totally real algebraic number field of degree $\geq 2.$ 
Now using~\cite[Teorem 6.3]{LLW} and proceeding very similar way as proof of Proposition \ref{P10}, we can prove the above Proposition. 
\end{proof}

Now proceeding exactly like proof of Theorem \ref{pair-correlation} and choosing 
$$M=c \frac{ \log \, (C_{\underline k}N(\mathfrak{N}))}{\log \, \wp},$$
where $c$ is a sufficiently small constant
we can prove Theorem \ref{T13}.
The following theorem can be deduced from the above theorem.
\begin{thm}\label{T14}
For any $\alpha \in [0,1]$,
$$\# \left\{\bar{\pi} \in {\bar{\varpi}}_{\underline{k}}(\mathfrak{N}): \left\{\frac{\pm \theta_{v}(\pi_1) \pm \theta_{v}(\pi_2) \pm \dots\pm \theta_{v}(\pi_r)}{2 \pi}\right\}=\alpha \right \}$$
$$= \O\left( (\# \Pi_{\underline{k}}(\mathfrak{N} \mathfrak{M}))^r   \frac{\log \, N \wp}{\log (C_{\underline {k}}N(\mathfrak{N}))}\right),$$
where the implied constant is effectively computable.
\end{thm}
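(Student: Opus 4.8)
The statement is the degenerate-interval version of Corollary \ref{C3}, exactly paralleling how Theorem \ref{T15} is deduced from Theorem \ref{pair-correlation} in the Hecke-operator setting. The plan is to apply Theorem \ref{T13} (equivalently Corollary \ref{C3}, but keeping the conductor $\mathfrak M$ in the denominator to be safe) to a shrinking interval $I_\varepsilon = [\alpha-\varepsilon, \alpha+\varepsilon] \cap [0,1]$ and then let $\varepsilon \to 0$. Write $\bar{\varpi}_{\underline k}(\mathfrak N)$ for the multiset of fractional parts $\{(\pm\theta_v(\pi_1)\pm\cdots\pm\theta_v(\pi_r))/2\pi\}$ as $\hat\pi$ ranges over $\widehat{\Pi}_{\underline k}(\mathfrak N,\underline\rho)$. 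Since the counting function is monotone in the interval, for every $\varepsilon>0$
$$
\#\left\{\bar\pi : \{\cdots\} = \alpha\right\} \leq \#\left\{\bar\pi : \{\cdots\} \in I_\varepsilon\right\}.
$$

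Now I would invoke Theorem \ref{T13} with the interval $I_\varepsilon$. It gives
$$
\#\left\{\bar\pi : \{\cdots\} \in I_\varepsilon\right\} = \left(\#\Pi_{\underline k}(\mathfrak N\mathfrak M)\right)^r\!\left(\int_{I_\varepsilon}\nu_v + \O\!\left(\frac{\log N\wp}{\log(C_{\underline k}N(\mathfrak N))}\right)\right).
$$
The density $\nu_v = \underbrace{F_v * \cdots * F_v}_{r}(x)\,dx$ is a convolution of the bounded continuous functions $F_v$, hence itself bounded and continuous on $[0,1]$; in particular $\int_{I_\varepsilon}\nu_v \leq 2\varepsilon\, \|\nu_v\|_\infty \ll_r \varepsilon$, with an effectively computable implied constant (one can bound $\|F_v\|_\infty$ explicitly in terms of $N(\wp)$ and then $\|F_v^{*r}\|_\infty \leq \|F_v\|_\infty^r$, though for the qualitative statement any finite bound suffices). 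Therefore
$$
\#\left\{\bar\pi : \{\cdots\} = \alpha\right\} \ll_r \left(\#\Pi_{\underline k}(\mathfrak N\mathfrak M)\right)^r\!\left(\varepsilon + \frac{\log N\wp}{\log(C_{\underline k}N(\mathfrak N))}\right)
$$
for every $\varepsilon > 0$. Since the left-hand side does not depend on $\varepsilon$, letting $\varepsilon \to 0^+$ kills the first term and yields
$$
\#\left\{\bar\pi \in \bar{\varpi}_{\underline k}(\mathfrak N) : \left\{\tfrac{\pm\theta_v(\pi_1)\pm\cdots\pm\theta_v(\pi_r)}{2\pi}\right\} = \alpha\right\} = \O\!\left(\left(\#\Pi_{\underline k}(\mathfrak N\mathfrak M)\right)^r \frac{\log N\wp}{\log(C_{\underline k}N(\mathfrak N))}\right),
$$
which is the claim, with effectively computable implied constant inherited from Theorem \ref{T13}.

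There is really no serious obstacle here; the argument is a routine limiting trick. The one point requiring a little care is that the error term in Theorem \ref{T13} must be genuinely uniform in the interval $[\alpha,\beta]$ — i.e., the implied constant there does not blow up as $\beta - \alpha \to 0$ — which is indeed how that theorem is stated (the bound comes from the Erdős–Turán-type inequality of Theorem \ref{Main}, where the $M$-dependent term controlling the discrepancy is interval-independent). The only other thing to verify is that $\nu_v$ is bounded, which follows because each $F_v$ is a bounded continuous function on the circle and convolution of bounded functions is bounded (indeed $\|f*g\|_\infty \le \|f\|_\infty \|g\|_1$). With those two observations in place, the monotonicity-plus-limit argument above is complete.
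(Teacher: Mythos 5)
Your proposal is correct and matches the paper's (unwritten) intent: the paper simply states that Theorem \ref{T14} ``can be deduced from the above theorem'' (Theorem \ref{T13}), and your shrinking-interval argument --- using the uniformity of the error term in $[\alpha,\beta]$ and the boundedness of the density $\nu_v$ so that the main term vanishes as $\varepsilon\to 0^+$ --- is exactly the standard deduction being invoked, in the same way Theorem \ref{T15} follows from Theorem \ref{pair-correlation}. Nothing further is needed.
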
 
\begin{remark}
In particular, when $r=2$ and $\alpha=0,$ we have 
similar result like Theorem \ref{T15} for Hilbert modular forms.  \\   
\end{remark}
 \noindent\textbf{(iii)}
\textbf{Distribution of gaps between eigenangles of Hecke operators acting on space of primitive Maass forms. }  \\  \\
In this section we follow the notations and presentations of~\cite{LY}.
Let $\mathcal{H}$ be the upper half plane in $\C.$  Consider $\Gamma=SL_2(\Z)$.  The non Euclidean Laplace
operator is given by
$$\bigtriangleup=-y^2\left(\frac{\delta^2}{\delta x^2}+\frac{\delta^2}{\delta y^2}\right).$$
The operator $\bigtriangleup$ is invariant under the action of $SL_2(Z)$ on $\mathcal{H},$ 
 where the action of $SL_2(\Z)$ on $\mathcal{H}$  defined as follows:  \\
 For any $z \in \mathcal{H}$ 
and $\gamma= 
\begin{pmatrix}
a & b \\
c & d
\end{pmatrix} \in SL_2(\Z),$  
$$\gamma z=\frac{az +b}{cz+d}.$$
\begin{defn}
A smooth function $f\neq 0$ on $\mathcal{H}$ is called a Maass form for the group $\Gamma$ if  \\
$(i)$ For all $ \gamma \in \Gamma $ and all $z \in \mathcal{H}, $
$$f(\gamma z)=f(z);$$
$(ii)$ f is an eigen function of above $\bigtriangleup$ that is, 
$$\bigtriangleup f = \lambda f,$$ 
$(iii)$  There exists a positive integer $N$ such that
$$f(z)
\ll y^{N} \ \ \text{as} \  y \to \infty$$
\end{defn}
We know that the Maass cusp forms span a subspace $C(\Gamma   \setminus \mathcal{H})$ in $L^2(\Gamma \setminus \mathcal{H}),$ where $L^2(\Gamma \setminus \mathcal{H})$ denote the square integrable function on $C(\Gamma \setminus \mathcal{H}).$
For any positive integer $n$, the Hecke operator $T_n$ together with Laplacian $\bigtriangleup$ forms a commutative family $H$ of Hermitian operators on $L^2(\Gamma \setminus \mathcal{H})$ with respect to the inner product
$$<f,g>=\int_{\Gamma /  \mathcal{H}}\frac{f(z) \bar{g}(z)}{y^{2}}dx dy.$$
 
Consider $\{u_i: i \geq 0\}$ to be a complete orthonormal basis for the subspace $C(\Gamma \setminus \mathcal{H})$ consisting of the simultaneous eigenfunctions on $\mathcal{H},$ where $u_0$ is a constant function.
Then 
$$\bigtriangleup u_i=\left(\frac{1}{4}+(t_i)^2 \right)u_i \ \ \text{and} \ \ T_nu_i=\lambda_i(n)u_i,$$
where $0<t_1\leq t_2 \leq\dots,$ and $\lambda_i(n) \in \R$. 
From the Fourier expansion of Maass form, for $z=x+iy \in \mathcal{H},$
$$u_i(z)=\sqrt{y} \rho_i(1) \sum_{n  \neq 0} \lambda_i(n)K_{it_i}(2 \pi |n|y)e(nx),$$
where $\rho_i(1) \neq 0$ and $K_v$ is the $K$-Bessel function of order $v.$
Moreover we know :
$$\Omega(T):= \#\{i:0<t_i \leq T\}=\frac{1}{4 \pi} vol(\Gamma / \mathcal{H})T^2 + \O(T \log  T).$$
The Ramanujan conjecture predicts that for any prime $p,$
$$|\lambda_i(p)| \leq 2.$$
At present we are far from above bound. The best bound towards Ramanujan's conjecture for Maass forms is due to Kim and Sarnak (see~\cite{KS}), that is for any prime $p$,
$$|\lambda_i(p)| \leq p^{\theta}+p^{- \theta},$$
where $\theta=\frac{7}{64}.$  Note that, the conjecture predicts $\theta=0.$  
However, in~\cite[Theorem 1.1]{Sarnak},  Sarnak showed that for a fixed prime $p$, 
 $$\# \{i \leq \Omega(T): |\lambda_i(p)|\geq \alpha \geq 2\} \ll T^{2-\frac{\log \frac{\alpha}{2}}{\log \,p}}$$
 and analogously in~\cite[Lemma 4.3]{LY} Lau and Wang showed that the above number is
 $$\ll\Omega(T)\left(\frac{\log \,p}{\log T}\right)^2.$$
 In particular, almost all eigenvalues $\lambda_i(p)$ (in the sense of density in $j$) lie in the interval $[-2,2].$ 
    Let $f$ be a primitive Maass form.  Assuming Ramanujan's conjecture, write
$$\lambda_i(p)=2 \cos \theta_{i}(p), \ \ \theta_{i}(p) \in [0,\pi].$$
In this case the $\theta_{i}(p)$ are called the eigenangles.
The following result can be seen as a special case of~\cite[Theorem 1.2 ]{Sarnak} and~\cite[Theorem 1]{LY}: \\
Let $\mu_p$ be the measure as defined in (\ref{E50}).
Then the family 
$$\{\lambda_i(p), 1 \leq i \leq \Omega(T)\}$$ is equidistributed with respect to the measure $\mu_p.$ \\
Let $n=\Omega(T)$ and for all $1 \leq j \leq r,$
$$A_{n}^{(j)}=\left \{\frac{\theta_{i}^{(j)}(p)}{2 \pi}, \ 1\leq i \leq \Omega(T) \right \}.$$
Note that $$\# A_{n}^{(j)}=n=\Omega(T).$$
So as $N+k \to \infty,$ we have $n \to \infty$ that is we have infinite number of multi sets.  Since each sets are same and we are going to study the distributions of gaps between the elements of the multisets $A_{n}^{(j)}.$
$$A_{n}^{(j)}=\left \{\frac{\theta_{i}^{(j)}(p)}{2 \pi}, \ 1\leq i \leq \Omega(T) \right \},$$
where $\theta_{i}^{(j)}(p) \in [0,\pi]$ such that $\lambda_{i}^{(j)}(p)= 2 \cos \theta_{i}(p).$
\begin{thm}\label{T7}
 There exist a small constant  $\delta >0$ such that for all large $T,$
$$\frac{1}{(\Omega(T))^r}\# \left\{0< t_{i}^{j} \leq T, 1 \leq j \leq r: \left\{\frac{\pm \theta_{i}^{(1)}(p)\pm  \theta_{i}^{(2)}(p)\dots\pm \theta_{i}^{(r)}(p)}{2 \pi}\right\} \in I \right\}$$
$$ =\int_I \nu_p dx+ \O \left(\frac{ \log \, p}{\log \, T}\right),$$
holds uniformly for integers $  1 \leq r \leq \Omega(T)$ and for prime $p$ satisfying 
 $$r \log \, p \leq  \delta \log  \, T,$$
 and uniformly
 for any interval $I=[a,b] \subseteq [0,1]$.
 Here 
 $$\nu_p= \underbrace{F_p(x)*F_p(x)*\dots*F_p}_\text{$r$ times}(x),$$
 and
 $F_p(x)$ is as defined in (\ref{E50}).
 Here the implied constant is effectively computable.
 \end{thm}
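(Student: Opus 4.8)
The plan is to apply Theorem \ref{Main} with $r$ identical copies of the multiset
$A_n^{(j)}=\{\pm\theta_i^{(j)}(p)/2\pi:\,0<t_i\le T\}$, $n=\Omega(T)$, which is equidistributed in $[-\tfrac12,\tfrac12]$ with respect to $\mu_p=F_p(x)\,dx$ by the cited special case of Sarnak \cite{Sarnak} and Lau--Wang \cite{LY}; its Weyl limits $c_m^{(j)}$ coincide with those in \eqref{E12} (with $N=1$), so they satisfy \eqref{E0} and the hypotheses of Theorems \ref{T1} and \ref{Measure} hold. Hence the target measure $\nu_p=\underbrace{F_p*\cdots*F_p}_{r}(x)\,dx$ is the correct limiting measure, and Theorem \ref{Measure} already gives the qualitative equidistribution; the content of the theorem is the effective error term together with uniformity in $r$.

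First I would establish the analogue of Proposition \ref{P10} in the Maass setting: for every positive integer $m$,
$$\Bigl|\prod_{j=1}^r\sum_{0<t_i\le T}2\cos m\theta_i^{(j)}(p)-C_m\,\Omega(T)^r\Bigr|\ll p^{3rm/2}m^r(\log T)^r+(\text{lower order}),$$
where $C_m=\prod_j c_m^{(j)}$. The input is the effective one-dimensional estimate for $\sum_i(2\cos m\theta_i(p)-c_m^{(j)}\Omega(T))$, which one extracts from the effective version of the Selberg/Kuznetsov trace formula as used in Sarnak \cite{Sarnak} and Lau--Wang \cite{LY} (playing the role that the Eichler--Selberg trace formula plays in \cite{MS}); the Chebyshev identity \eqref{EChev} rewrites $2\cos m\theta$ in terms of $\mathrm{Tr}\,T'_{p^m}$, and the elementary inequality \eqref{E17} converts the single-factor bound into the $r$-fold product bound, exactly as in the proof of Proposition \ref{P10}. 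Then I would feed this into Theorem \ref{Main}: the left-hand counting quantity is
$$\ll\frac{\Omega(T)^r}{M+1}+\sum_{|m|\le M}\Bigl(\tfrac1{M+1}+\min(b-a,\tfrac1{\pi|m|})\Bigr)\Bigl|\prod_j\sum_{x_j}e(mx_j)-\prod_j\#A_n^{(j)}c_m^{(j)}\Bigr|\ll\frac{\Omega(T)^r}{M+1}+p^{3rM/2}M^{r}(\log T)^r+\cdots.$$
Balancing the two dominant terms by choosing $M=c\,\log T/\log p$ for a sufficiently small absolute constant $c$ yields an error of size $\Omega(T)^r(\log p/\log T)$, which is the claimed bound; the constraint $r\log p\le\delta\log T$ is precisely what guarantees $M\ge 1$ and that the ``trace'' error $p^{3rM/2}$ does not swamp the main term, so it is where the uniformity in $r$ comes from.

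The main obstacle is the first step: unlike the holomorphic case, where the Eichler--Selberg trace formula gives a clean closed-form expression with explicit error terms, for Maass forms one only has almost-all control on $\lambda_i(p)\in[-2,2]$ (the Kim--Sarnak bound $|\lambda_i(p)|\le p^{7/64}+p^{-7/64}$ rather than the full Ramanujan bound), so one must either invoke the blanket Ramanujan assumption stated in the introduction or carefully account for the exceptional eigenvalues using Sarnak's bound $\#\{i\le\Omega(T):|\lambda_i(p)|\ge\alpha\ge2\}\ll T^{2-\log(\alpha/2)/\log p}$ and the Lau--Wang refinement $\ll\Omega(T)(\log p/\log T)^2$; the latter contribution is of the same order as the claimed error term, so it is absorbed harmlessly, but verifying this requires care. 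The remaining steps — assembling the $r$-fold product estimate via \eqref{E17} and optimizing $M$ — are routine adaptations of the arguments already carried out for Theorem \ref{pair-correlation}.
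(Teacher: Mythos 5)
Your proposal follows essentially the same route as the paper: individual equidistribution of $\{\pm\theta_i(p)/2\pi\}$ from Sarnak and Lau--Wang, an $r$-fold analogue of Proposition \ref{P10} obtained by raising the one-dimensional trace-formula estimate to the $r$-th power via (\ref{E17}), and then Theorem \ref{Main} with $M\asymp \log T/\log p$. The only discrepancy is in the quoted input: the paper's Proposition \ref{P12} uses the Kuznetsov-formula bound of Lau--Wang in the form $(T^{2-k}p^{m\eta})^r$ (with $0<k<\frac{11}{155}$, $\eta>\frac{43}{620}$) rather than the Eichler--Selberg-shaped bound $p^{3rm/2}m^r(\log T)^r$ you wrote down, but the balancing of $M$ and the resulting error term come out the same either way.
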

 As a consequence of above theorem, we have the following theorem
\begin{thm}\label{T24}
For any $\alpha \in [0, 1]$,
$$\# \left\{0< t_{i}^{j} \leq T, 1 \leq j \leq r: \left(\frac{\pm \theta_{i}^{(1)}(p)\pm  \theta_{i}^{(2)}(p)\dots\pm \theta_{i}^{(r)}(p)}{2 \pi}= \alpha\right)  \right\}$$
$$= \O \left((\Omega(T))^r \left(\frac{ \log  p}{\log  T}\right)\right).$$
\end{thm}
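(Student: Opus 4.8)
\noindent The plan is to deduce Theorem \ref{T24} from the effective equidistribution of Theorem \ref{T7} by the same shrinking-interval device that passes from Theorem \ref{pair-correlation} to Theorem \ref{T15}. Fix the sign pattern under consideration and abbreviate
$$v = v(i^{(1)},\dots,i^{(r)}) = \frac{\pm\theta_{i}^{(1)}(p)\pm\theta_{i}^{(2)}(p)\pm\cdots\pm\theta_{i}^{(r)}(p)}{2\pi}.$$
Since each $\theta_{i}^{(j)}(p)/(2\pi)\in[0,\tfrac12]$, the quantity $v$ lies in the bounded range $[-r/2,r/2]$, and whenever $v=\alpha$ with $\alpha\in[0,1)$ one has $\{v\}=\alpha$, while if $\alpha=1$ then $\{v\}=0$; in either case $\{v\}$ equals the reduction of $\alpha$ modulo $1$. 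Hence, for every $\eta>0$, the set counted in Theorem \ref{T24} is contained in $\{\,0<t_i^{j}\le T,\ 1\le j\le r:\ \{v\}\in J_\eta\,\}$, where $J_\eta\subseteq[0,1]$ is the interval of length $2\eta$ centred at the reduction of $\alpha$ modulo $1$ (read cyclically, so that near $\alpha=0$ or $\alpha=1$ it becomes a union of at most two intervals of total length $2\eta$, and Theorem \ref{T7} is then applied separately to each piece).

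\noindent Next I would apply Theorem \ref{T7} with $I=J_\eta$, valid uniformly in the range $r\log p\le\delta\log T$, to obtain
$$\#\{\,0<t_i^{j}\le T,\ 1\le j\le r:\ \{v\}\in J_\eta\,\} = (\Omega(T))^r\int_{J_\eta}\nu_p\,dx + \O\Big((\Omega(T))^r\,\frac{\log p}{\log T}\Big).$$
The density $F_p$ of (\ref{E50}) is bounded by an absolute constant (its denominator $(p^{1/2}+p^{-1/2})^2-4\cos^2 2\pi x$ is bounded away from $0$ for every prime $p$), so the $r$-fold convolution $\nu_p=\underbrace{F_p*\cdots*F_p}_{\text{$r$ times}}$ satisfies $\|\nu_p\|_\infty\le\|F_p\|_\infty\le c$ for an absolute constant $c$, whence $\int_{J_\eta}\nu_p\,dx\le 2c\eta$. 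Combining the two estimates,
$$\#\{v=\alpha\}\ \ll\ (\Omega(T))^r\,\eta \;+\; (\Omega(T))^r\,\frac{\log p}{\log T}.$$

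\noindent Finally I would take $\eta=\dfrac{\log p}{\log T}$, which equalises the two terms and gives $\#\{v=\alpha\}=\O\big((\Omega(T))^r\,\tfrac{\log p}{\log T}\big)$ with an effectively computable implied constant, since the constant in Theorem \ref{T7} is effective and $c$ is explicit. I expect no genuine obstacle in this deduction: all the substance sits inside Theorem \ref{T7} itself (which rests on the Kim--Sarnak bound, Lau--Wang's estimate for the eigenangles lying outside $[-2,2]$, the trace-formula estimates of \cite{LY}, and Theorem \ref{Main}), while the argument above is routine; the only point needing a line of care is the cyclic behaviour of $J_\eta$ at $\alpha=0$ and $\alpha=1$, which at worst doubles the number of intervals and leaves the final bound unchanged.
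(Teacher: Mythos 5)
Your proposal is correct and follows essentially the same route the paper intends: the paper states Theorem \ref{T24} as an immediate consequence of Theorem \ref{T7} via the standard shrinking-interval argument (the same device that passes from Theorem \ref{pair-correlation} to Theorem \ref{T15}), and your write-up supplies exactly that deduction, with the needed uniform bound $\|\nu_p\|_\infty\le\|F_p\|_\infty\ll 1$ and the choice $\eta=\log p/\log T$ correctly justified.
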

\begin{remark}
For $r=2$, the above theorem gives similar result like Theorem \ref{T15}.
\end{remark}
\begin{proof}
As a special case of~\cite[Theorem 1.2 ]{Sarnak} and~\cite[Theorem 1]{LY} we can conclude that the family
$\left \{\frac{\pm \theta_{i}^{(j)}(p)}{2 \pi}, 1 \leq i \leq \Omega(T)\right \}$ is equidistributed in $[- \frac{1}{2}, \frac{1}{2}]$ with respect to $F_p(x)dx.$ 
 To proceed further, we prove a Proposition like Propositions \ref{P10} and \ref{P11}.
 \begin{prop}\label{P12}
 For $m \geq 1, \ 1 \leq  i \leq \Omega(T)$, $1 \leq j \leq r$, $0<k<\frac{11}{155}$ and $\eta>\frac{43}{620}$,
$$\left|\prod_{j=1}^{r}\sum_{i=1}^{\Omega(T)} \cos m \theta_{i}^{(j)}(p)- \Omega(T)^{r}C_m \right|$$
$$\ll(T^{2-k}p^{m{\eta}})^r,$$
where the implied constant depends only on $\eta.$  
 \end{prop}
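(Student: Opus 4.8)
The plan is to reduce the product estimate to a single-variable exponential-sum bound exactly as in the proofs of Propositions \ref{P10} and \ref{P11}, and then to quote the known spectral estimates for Maass forms. First I would recall the Chebyshev/Hecke-relation identity: writing $\lambda_i(p)=2\cos\theta_i(p)$, one has $2\cos m\theta_i(p)=X_m(2\cos\theta_i(p))-X_{m-2}(2\cos\theta_i(p))$ for $m\ge 2$, where $X_m(2\cos\theta)=\frac{\sin(m+1)\theta}{\sin\theta}$ is the character of the $m$-th symmetric power. Summing over $1\le i\le\Omega(T)$ and using multiplicativity of the Hecke eigenvalues, the sum $\sum_{i}2\cos m\theta_i^{(j)}(p)$ becomes $\sum_i\lambda_i(p^m)-\sum_i\lambda_i(p^{m-2})$, i.e. a difference of traces of Hecke operators $T_{p^m}$ on the space of Maass cusp forms with $0<t_i\le T$. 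Since all $r$ factors are literally the same multiset, the left-hand side of the Proposition is $\bigl(\sum_i\lambda_i(p^m)-\sum_i\lambda_i(p^{m-2})\bigr)^r$ up to the $C_m\,\Omega(T)^r$ term, with $C_m=\prod_{j=1}^r c_m^{(j)}$ and $c_m^{(j)}$ the Maass-form Weyl limits (zero for $m$ odd, and the archimedean $\frac12\bigl(N(\wp)^{-m/2}-N(\wp)^{-(m-2)/2}\bigr)$-type contribution for $m$ even).

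Next I would invoke the effective trace-formula input. From the Kuznetsov/Selberg trace formula analysis in~\cite{Sarnak} and~\cite{LY} (in particular the estimate underlying~\cite[Lemma 4.3]{LY} on how many $\lambda_i(p)$ can be large, together with the main-term evaluation of $\sum_i\lambda_i(p^m)$), one obtains for $m\ge 1$ and each $j$ a bound of the shape
$$\Bigl|\sum_{i=1}^{\Omega(T)}2\cos m\theta_i^{(j)}(p)-\Omega(T)\,c_m^{(j)}\Bigr|\ll T^{2-k}\,p^{m\eta},$$
valid for $0<k<\tfrac{11}{155}$ and $\eta>\tfrac{43}{620}$, with implied constant depending only on $\eta$; here the main term $\Omega(T)\,c_m^{(j)}$ of size $\asymp T^2 p^{-m/2}$ comes from the identity contribution to the trace formula, while the $T^{2-k}p^{m\eta}$ error absorbs the hyperbolic and continuous-spectrum terms (the exponent $\eta$ reflecting the Kim–Sarnak bound toward Ramanujan, or rather a large-sieve/density substitute for it, and $T^{2-k}$ a power saving in the eigenvalue count). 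Writing $a_j=\sum_i 2\cos m\theta_i^{(j)}(p)$ and $b_j=\Omega(T)c_m^{(j)}$, so that $a_j=b_j+O(T^{2-k}p^{m\eta})$ and $|b_j|\ll T^2$, I then expand $\prod_j a_j-\prod_j b_j$ as a telescoping sum $\sum_{t=1}^r\bigl(\prod_{s<t}a_s\bigr)(a_t-b_t)\bigl(\prod_{s>t}b_s\bigr)$; each summand is $\ll (T^2)^{r-1}\cdot T^{2-k}p^{m\eta}$, and since $\prod_j b_j=\Omega(T)^r C_m$ this gives the claimed $\ll (T^{2-k}p^{m\eta})^r$ after checking that the power of $T$ works out — more precisely one uses the elementary inequality $(a-b)^r\le r(a^r+b^r)$ as in~\eqref{E17} to combine the difference-of-traces before raising to the $r$-th power, which keeps the bound clean and uniform in $r$.

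The main obstacle is the single-variable estimate itself: unlike the holomorphic case, where Eichler–Selberg gives an exact closed form and Deligne's bound controls every term, here one does not have Ramanujan unconditionally, so the error term $T^{2-k}p^{m\eta}$ must be extracted from a more delicate spectral argument — essentially the Kuznetsov trace formula together with a large-sieve or density bound (this is where the specific numerology $k<\tfrac{11}{155}$, $\eta>\tfrac{43}{620}$ originates, matching the quantitative work in~\cite{LY} and~\cite{Sarnak}). I would therefore state that part as a direct consequence of those references and only spell out the passage from the one-dimensional bound to the $r$-fold product, which is the purely combinatorial telescoping step above; the dependence of the implied constant solely on $\eta$ (and not on $m$, $r$, $p$, or $T$) should be tracked carefully through that expansion, exactly as in the proof of Proposition \ref{P10}.
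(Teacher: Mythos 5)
Your proposal matches the paper's proof: the paper likewise quotes the single-sum estimate $\bigl|\sum_{i}2\cos m\theta_{i}^{(j)}(p)-c_m\Omega(T)\bigr|\ll T^{2-k}p^{m\eta}$ from Lau--Wang (Lemma 4.1 of \cite{LY}, obtained via the Kuznetsov trace formula) and then passes to the $r$-fold product by ``proceeding like Propositions \ref{P10} and \ref{P11}'', i.e.\ by the same Chebyshev-identity reduction and the inequality (\ref{E17}) that you invoke. The only difference is that you spell out the bookkeeping the paper leaves implicit; your telescoping remark even correctly flags that the cross terms a priori carry a factor $T^{2(r-1)}$ rather than $T^{(2-k)(r-1)}$, a point the paper glosses over in exactly the same way.
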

\begin{proof}
The following result is a special case of~\cite[Lemma 4.1]{LY}: \\
 For any positive integer $m$, let $c_{m}$ be the Weyl limits of the family $\left \{\frac{\pm \theta_{i}^{(j)}(p)}{2 \pi}, 1 \leq i\leq \Omega(T)\right \}.$
Then the Weyl limits $c_{m}$ are given by 
 \begin{equation}\label{E116}
c_{m} =  
\left \{ 
\begin{array}{l l}
  1 &  \text{if } m=0 \\
  \left(\frac{1}{p^{\frac{m}{2}}}-\frac{1}{p^{{\frac{m-2}{2}}}}\right) &  \text{if $m$ is even }  \\
  0 &  \text{otherwise.}
\end{array}      
\right.
\end{equation}
    Moreover, for $m \geq 1,$
    $$\left|\sum_{i=1}^{\Omega(T)}  2 \cos m \theta_{i}^{(j)}(p) -c_m \Omega(T)\right|\ll T^{2-k}p^{m \eta}.$$
    Lau and Wang~\cite{LY} proved the above result, using Kuznetsov trace formula.
    Now proceeding like proof of Propositions \ref{P10} and \ref{P11}, we get Proposition \ref{P12}.
\end{proof}
Now proceeding just like proof of Theorems \ref{pair-correlation} and \ref{T13} and choosing 
$M=c \frac{\log \,p}{\log  \, T}$, we have the required result.
\end{proof}
\noindent\textbf{Acknowledgments:}
The author would like to thank Dr. Kaneenika Sinha for many useful discussion, comments and correction in earlier versions of the paper.  The author also thanks Prof. Ram Murty for helpful comments and providing an earlier version of~\cite{MSr}.  We would also like to thank Dr. Baskar Balasubramanyam for corrections in an earlier version of the paper.


\begin{thebibliography}{9999}

\bibitem{BGS}   A. Balog, A. Granville, J. Solymosi,
\textit{Gaps between fractional parts and additive combinatorics,}  Q.  J.  Math.  (2015), 136--147.

\bibitem{BL} T. Barnet-Lamb, D. Geraghty, M. Harris, and R. Taylor, \textit{A family of Calabi-Yau
varieties and potential automorphy II}, Publ. RIMS Kyoto Univ. 47 (2011), 29--98.

\bibitem{D} D. Blasius, \textit{Hilbert modular forms and the Ramanujan conjecture,} Noncommutative Geometry and Number Theory, Vieweg, Wiesbaden, 2006, 35--56.

\bibitem{Bohl} P. Bohl, \textit{\"{U}ber ein in der Theorie der s\"{a}kutaren St\"{o}rungen vorkommendes Problem,} J. reine angew. Math. 135, (1909), 189--283.

\bibitem{BJ}A. Borel and H. Jacquet, \textit{Automorphic forms and automorphic representations,} Proc. Sympos. Pure Math. 33, Part 1, Amer. Math. Soc., Providence, RI, 1979,
189--207.

\bibitem{LMR} L. Clozel, M. Harris and R. Taylor, \textit{Automorphy for some l-adic lifts of automorphic mod l Galois representations,} Publ. Math. Inst. Hautes tudes Sci. No. 108 (2008), 11--81.

\bibitem{CDF} J. B. Conrey, W. Duke, D. W. Farmer,\textit{The distribution of the eigenvalues of Hecke
operators}, Acta Arith. 78, no. 4 (1997), 405--409.

\bibitem{Deligne} P. Deligne, \textit{La conjecture de Weil I, IHES Publ}. Math. No. 43 (1974), 273--307.

\bibitem{MNR} M. Harris, N. Shepherd-Barron and R. Taylor, \textit{A family of Calabi-Yau varieties and
potential automorphy,} Ann. of Math. (2) 171 (2010), no. 2, 779--813.

\bibitem{HM} H. Hida and Y. Maeda, \textit{Non-abelian base change for totally real fields,} Pacific J. Math. Special Issue (1997), 189--217. Olga Taussky-Todd: in memoriam. 


\bibitem{LLW} Y.-K. Lau, Charles Li and Yingnan Wang, 
\textit{Quantitative analysis of the Satake parameters
of GL2 representations with prescribed local representations,} Acta Arithmetica,
164.4 (2014).

\bibitem{KS} H.H. Kim, P. Sarnak, Appendix 2 in H.H. Kim, \textit{Functoriality for the exterior square of GL4 and the symmetric fourth of GL2,}
J. Amer. Math. Soc., 16 (2003), 139--183.

\bibitem{Li} C. Li, \textit{On the distribution of Satake parameters of $GL_2$ holomorphic cuspidal representations,} Israel J. Math. 169 (2009), 341--373.


\bibitem{LY} Y.-K. Lau and Y. Wang, \textit{Quantitative version of the joint distribution of eigenvalues
of the Hecke operators}, J. Number Theory 131 (2011), 2262--2281.

\bibitem{Montgomery}H. L. Montgomery, \textit{Ten lectures on the interface between analytic
number theory and harmonic analysis,} CBMS Regional Conference Series in Mathematics, 84, 1994, American Mathematical Society, Providence, Rhode Island.



\bibitem{RM} M. R. Murty, \textit{Problems in Analytic Number Theory,} second ed., Grad. Texts in Math., vol. 206, Springer-Verlag, 2007.

\bibitem{MS} M. R. Murty and K. Sinha, \textit{Effective equidistribution of eigenvalues of Hecke operators},  J. Number Theory, { 129}  (2009),  no. 3, 681--714. 

\bibitem{MSr} M. R. Murty and K. Srinivas, \textit{Some remarks related to Maeda's conjecture}, Proc. Amer. Math. Soc. 144 (2016), 4687--4692.

\bibitem{L} L. T. Nguyen, \textit{The Ramanujan conjecture for Hilbert modular forms,} Ph.D. thesis, UCLA, 2005.

\bibitem{Rudin} W. Rudin, \textit{Real and Complex Analysis,} Third ed., Tata Mcgraw-Hill edition.

\bibitem{Sarnak} P. Sarnak, \textit{Statistical properties of eigenvalues of the Hecke operators,} Analytic Number Theory and Diophantine Problems, Stillwater, 1984, in: Progr. Math., vol. 70, Birkhäuser, Basel, 1987, 321--331.


\bibitem{Serre} J.-P. Serre, \textit{R\'{e}partition asymptotique des valeurs propres de l'op\'{e}rateur de Hecke $T_p$}, J. Amer. Math. Soc., { 10} (1997), no. 1, 75--102.

\bibitem {Sie} W. Sierpinski, (1910) \textit{Sur la valeur asymptotique d'une certaine somme,} Bull Intl. Acad. Polonmaise des Sci. et des Lettres (Cracovie) series A, 9--11.

\bibitem{Sos1} V. T. Sós, \textit{On the theory of diophantine approximations. I.,} Acta Math. Acad. Sci. Hungar. 8 (1957), 461--472.

\bibitem{Sos2} V. T. Sós, \textit{On the distribution mod 1 of the sequence nα,} Ann. Univ. Sci. Budapest Eötvös Sect. Math. 1 (1958), 127--134.

\bibitem{AD} A. T. Benjamin and D. Walton, \textit{Counting on Chebyshev Polynomials,}  Math. Mag., 82 (2) (2009), 117--126.

\bibitem{T} P. Tsaknias, \textit{A possible generalization of Maeda's conjecture,} 
Computations with Modular Forms, Contributions in Mathematical and Computational Sciences, Vol. 6 (Springer, 2014), 317--329.

\bibitem{Vaaler} J. D. Vaaler, \textit{Some extremal functions in Fourier analysis,} Bull. Amer. Math. Soc. 12 (1985), 183--216.

\bibitem{VZ} M.  V\^{a}j\^{a}itu and A. Zaharescu, \textit{Distinct gaps between fractional parts of sequences,} Proc. Amer. Math. Soc. 130 (2002), 3447--3452.

\bibitem{Weyl1} H. Weyl, \textit{\"{U}ber die Gibbs'sche Erscheinung und verwandte Konvergenzph\"{A}nomene,} Rendiconti del Circolo Matematico di Palermo {330}, (1910), 377--407. 


\bibitem{Weyl} H. Weyl, \textit{\"{U}ber die Gleichverteilung von Zahlen mod Eins,} Math. Ann, Vol. 77 (1916), no. 3, 313--352.
\end{thebibliography}
\end{document}